\newtheorem{theorem}{Theorem}[section]
\newtheorem{proposition}[theorem]{Proposition}
\newtheorem{definition}[theorem]{Definition}
\newtheorem{lemma}[theorem]{Lemma}
\newtheorem{remark}[theorem]{Remark}
\newtheorem{example}[theorem]{Example}
\numberwithin{equation}{section}
\numberwithin{theorem}{section}
\renewcommand{\phi}{\varphi}
\newcommand{\eps}{\varepsilon}
\newcommand{\nada}[1]{}
\newcommand{\R}{\mathbb R}
\renewcommand{\H}{{\mathcal H}}
\newcommand{\bd}{\partial}
\newcommand{\beq}{\begin{equation}}
\newcommand{\eeq}{\end{equation}}
\definecolor{darkgreen}{rgb}{0,0.55,0}
\newcommand{\res}{\mathop{\hbox{\vrule height 7pt width .5pt depth
			0pt\vrule height .5pt width 6pt depth 0pt}}\nolimits}
\newfont{\indic}{bbmss12}
\definecolor{light}{gray}{.97}
\title{Variational approximation of functionals defined on 1-dimensional connected sets: the planar case}
\author{M. Bonafini\thanks{Dipartimento di  Matematica, Universit\`a di Trento, Italy, e-mail: mauro.bonafini@unitn.it},
	G. Orlandi\thanks{Dipartimento di Informatica, Universit\`a di Verona, Italy, e-mail: giandomenico.orlandi@univr.it},
	\'E. Oudet\thanks{Laboratoire Jean Kuntzmann, Universit\'e de Grenoble Alpes, France, e-mail: edouard.oudet@imag.fr}}
\date{September 24, 2018}
\begin{document}

\maketitle

\begin{abstract}
In this paper we consider variational problems involving 1-dimensional connected sets in the Euclidean plane, such as the classical Steiner tree problem and the irrigation (Gilbert--Steiner) problem. We relate them to optimal partition problems and provide a variational approximation through Modica--Mortola type energies proving a $\Gamma$-convergence result. We also introduce a suitable convex relaxation and develop the corresponding numerical implementations. The proposed methods are quite general and the results we obtain can be extended to $n$-dimensional Euclidean space or to more general manifold ambients, as shown in the companion paper \cite{BoOrOu}.
\end{abstract}

\section{Introduction}

Connected one dimensional structures play a crucial role in very different areas
like discrete geometry (graphs, networks, spanning and Steiner trees), structural mechanics
(crack formation and propagation), inverse problems (defects identification, contour segmentation), etc. The
modeling of these structures is a key problem both from the theoretical
and the numerical point of view. Most of the difficulties encountered in studying
such one dimensional objects are related to the fact that they are not
canonically associated to standard mathematical quantities. In this article we plan
to bridge the gap between the well-established methods of multi-phase
modeling and the world of one dimensional connected sets or networks. Whereas  we strongly
believe that our approach may lead to new points of view in quite different contexts,
we restrict here our exposition to the  study of two standard  problems
in the Calculus of Variations which are respectively the classical Steiner tree problem and the  Gilbert--Steiner problem (also called the irrigation problem).

The Steiner Tree Problem (STP) \cite{GiPo} can be described as follows: given
$N$ points $P_1, \dots, P_N$ in a metric space $X$ (e.g. $X$ a graph, with
$P_i$ assigned vertices) find a connected graph $F\subset X$ containing
the points $P_i$ and having minimal length. Such an optimal graph $F$ turns out
to be a tree and is thus called a \emph{Steiner Minimal Tree} (SMT).
In case $X=\R^d$, $d\ge 2$ endowed with the Euclidean $\ell^2$ metric, one refers
often to the {\em Euclidean} or {\em geometric} STP, while  for $X=\R^d$ endowed
with the $\ell^1$ (Manhattan) distance or for $X$ contained in a fixed grid
$\mathcal G\subset\R^d$ one refers to the {\em rectilinear} STP.
Here we will adopt the general metric space formulation of \cite{PaSt}: given a
metric space $X$, and given a compact (possibly infinite) set of terminal
points $A\subset X$ , find
$$\label{eq:stp}
\inf \{ {\mathcal H}^1(S) \text{, $S$ connected, } S\supset A \},
\leqno{(\text{STP})}
$$
where ${\mathcal H}^1$ indicates the 1-dimensional Hausdorff measure on $X$.
Existence of solutions for (STP)  relies on Golab's compactness theorem for compact
connected sets, and it holds true also in generalized cases (e.g. $\inf \mathcal H^1(S)$,  $S\cup A$ connected).

The Gilbert--Steiner problem, or $\alpha$-irrigation problem \cite{BeCaMo, Xia} consists in finding a network $S$ along which to flow unit masses located at the sources $P_1, \dots, P_{N-1}$ to the target point $P_N$. Such a network $S$ can be viewed as $S = \cup_{i=1}^{N-1}\gamma_i$, with $\gamma_i$ a path connecting $P_i$ to $P_N$, corresponding to the trajectory of the unit mass located at $P_i$. To favour branching, one is led to consider a cost to be minimized by $S$ which is a sublinear (concave) function of the mass density $\theta(x) = \sum_{i=1}^{N-1} \mathbf{1}_{\gamma_i}(x)$: i.e., for $0\leq\alpha\le 1$, find
$$
\inf \int_S |\theta(x)|^\alpha d{\mathcal H}^1(x).
\leqno{(I_\alpha)}
$$
Notice that $(I_1)$ corresponds to the Monge optimal transport problem, while $(I_0)$ corresponds to (STP). As for (STP) a solution to $(I_\alpha)$ is known to exist and the optimal network $S$ turns out to be a tree \cite{BeCaMo}. 

Problems like (STP) or $(I_\alpha)$ are relevant for the design of optimal transport channels or
networks connecting given endpoints,  for example the optimal design of net
routing in VLSI circuits in the case $d=2,3$.
The Steiner Tree Problem has been widely studied from the theoretical and
numerical point of view in order to efficiently devise constructive solutions,
mainly through combinatoric optimization techniques. Finding a Steiner Minimal
Tree is known to be a NP hard problem (and even NP complete in certain cases),
see for instance \cite{Ar, Ar1} for a comprehensive survey on PTAS algorithms for (STP).

The situation in the Euclidean case for (STP) is theoretically well understood: given $N$
points $P_i\in \R^d$ a SMT connecting them always exists, the solution being in
general not unique (think for instance to symmetric configurations of the
endpoints $P_i$). The SMT is a union of segments connecting the endpoints,
possibly meeting at $120^\circ$ in at most $N-2$ further branch points, called
{\em Steiner points}.

Nonetheless, the quest of computationally tractable approximating schemes for
(STP) and for $(I_\alpha)$ has recently attracted a lot of attention in the Calculus of Variations
community. In particular $(I_\alpha)$ has been studied in the framework of optimal branched transport theory \cite{BeCaMo,BrBuSa}, while (STP) has been interpreted as respectively
a size minimization problem for 1-dimensional connected sets \cite{Mo1,DPHa},
or even a Plateau problem in
a suitable class of vector distributions endowed with some algebraic structure
\cite{Mo1,MaMa}, to be solved by finding suitable {\em calibrations}
\cite{MaOuVe}. Several authors have proposed different approximations of those
problems, whose validity is essentially limited to the planar case, mainly
using a phase field based approach together with some coercive regularization,
see e.g. \cite{BoLeSa,ChMeFe,OuSa,Mi-al}.

Our aim is to propose a variational approximation for (STP) and for the Gilbert--Steiner irrigation problem (in the equivalent formulations of \cite{Xia,MaMa2}) in the Euclidean
case $X=\R^d$, $d\ge 2$.  In this paper we focus on the planar case $d=2$ and
prove a $\Gamma$-convergence result (see Theorem \ref{thm:main} and
Proposition \ref{cor:minconpsi}) by considering integral functionals of
Modica--Mortola type \cite{MoMo}. In the companion paper \cite{BoOrOu} we
rigorously prove that certain integral functionals of Ginzburg-Landau type
(see \cite{ABO2}) yield a variational approximation for (STP) and $(I_\alpha)$ valid in any
dimension $d\ge 3$. This approach is related to the interpretation of (STP) and $(I_\alpha)$ as
a mass minimization problem in a cobordism class of integral currents with multiplicities
in a suitable normed group as studied by Marchese and Massaccesi in \cite{MaMa, MaMa2}
(see also \cite{Mo1} for the planar case). Our method is quite general and
may be easily adapted to a variety of situations (e.g. in manifolds or more
general metric space ambients, with densities or anisotropic norms, etc.).

The plan of the paper is as follows: in Section \ref{secpartition} we reformulate
(STP) and $(I_\alpha)$ as a suitable modification of the optimal partition problem in the planar case. In
Section \ref{secgamma}, we state and prove our main $\Gamma$-convergence results, respectively Proposition \ref{cor:minconpsi} and Theorem \ref{thm:main}.
Inspired by \cite{ChCrPo}, we introduce in Section \ref{secconv} a convex relaxation of the corresponding energies. In
Section \ref{secnum} we present our approximating scheme for (STP) and for the Gilbert-Steiner problem and illustrate
its flexibility in different situations, showing how our convex formulation is
able to recover multiple solutions whereas $\Gamma$-relaxation detects any locally minimizing configuration. Finally, in Section \ref{secgen} we propose some examples and
generalizations that are extensively studied in the companion paper \cite{BoOrOu}.

\section{Steiner problem for Euclidean graphs and optimal partitions\label{secpartition}}

In this section we describe some optimization problems on Euclidean graphs with fixed endpoints set $A$, like (STP) or irrigation-type problems, following the approach of \cite{MaMa,MaMa2}, and we rephrase them as optimal partition-type problems in the planar case $\R^2$.

\subsection{Rank one tensor valued measures and acyclic graphs}\label{sec:intro}
For $M>0$, we consider Radon measures $\Lambda$ on $\R^d$ with values in the space of matrices $\R^{d\times M}$. For each $i = 1,\dots,M$ we define as $\Lambda_i$ the vector measure representing the $i$-th column of $\Lambda$, so that we can write $\Lambda = (\Lambda_1,\dots,\Lambda_{M})$. The total variation measures $|\Lambda_i|$ are defined as usual with respect to the Euclidean structure on $\R^d$, while we set $\mu_\Lambda = \sum_{i=1}^{M} |\Lambda_i|$. Thanks to the Radon--Nikodym theorem we can find a matrix-valued density function $p(x) = (p_1(x), \dots, p_{M}(x))$, with entries $p_{ki} \in L^1(\R^d, \mu_\Lambda)$ for all $k = 1,\dots,d$ and $i = 1,\dots,M$, such that $\Lambda = p(x) \mu_\Lambda$ and $\sum_{i=1}^{M} |p_i(x)| = 1$ for $\mu_\Lambda$-a.e $x \in \R^d$ (where on vectors of $\R^d$ $|\cdot|$ denotes the Euclidean norm). Whenever $p$ is a rank one matrix $\mu_\Lambda$-almost everywhere we say that $\Lambda$ is a rank one tensor valued measure and we write it as $\Lambda = \tau \otimes g \cdot \mu_\Lambda$ for a $\mu_\Lambda$-measurable unit vector field $\tau$ in $\R^d$ and $g \colon \R^d \to \R^{M}$ satisfying $\sum_{i=1}^{M} |g_i| = 1$.

Given $\Lambda \in \mathcal{M}(\R^d, \R^{d\times M})$ and a function $\phi \in C^\infty_c(\R^d; \R^{d\times M})$, with $\phi = (\phi_1, \dots, \phi_{M})$, we have
\[
\langle \Lambda, \phi\rangle = \sum_{i=1}^{M} \langle \Lambda_i, \phi_i \rangle = \sum_{i=1}^{M} \int_{\R^d} \phi_i \,d\Lambda_i,
\]
and fixing a norm $\Psi$ on $\R^{M}$, one may define the $\Psi$-mass measure of $\Lambda$ as
\begin{equation}\label{eq:psitv}
|\Lambda|_\Psi(B):=\sup_{\substack{\omega \in C^\infty_c(B;\R^d) \\ h \in C^{\infty}_c(B;\R^{M})}} \left\{   \langle \Lambda, \omega\otimes h\rangle \, , \quad |\omega(x)|\le 1\, ,\  \Psi^*(h(x))\le 1 \right\}\, ,
\end{equation}
for $B \subset \R^d$ open, where $\Psi^*$ is the dual norm to $\Psi$ w.r.t. the scalar product on $\R^{M}$, i.e. $$\Psi^*(y) = \sup_{x \in \R^{M}} \langle y, x \rangle - \Psi(x).$$ Denote $||\Lambda||_\Psi=|\Lambda|_\Psi(\R^d)$ the $\Psi$-mass norm of $\Lambda$.
In particular one can see that $\mu_\Lambda$ coincides with the measure $|\Lambda|_{\ell^1}$, which from now on will be denoted as $|\Lambda|_1$, and any rank one measure $\Lambda$ may be written as $\Lambda = \tau \otimes g \cdot |\Lambda|_1$ so that $|\Lambda|_{\Psi} = \Psi(g)|\Lambda|_1$. Along the lines of \cite{MaMa} we will rephrase the Steiner and Gilbert--Steiner problem as the optimization of a suitable $\Psi$-mass norm over a given class of rank one tensor valued measures.

Let $A=\{P_1,\dots,P_N\}\subset\R^d$, $d\ge2$, be a given set of $N$ distinct points, with $N>2$. We define the class $\mathcal{G}(A)$ as the set of \emph{acyclic graphs} $L$ connecting the endpoints set $A$ such that $L$ can be described as the union $L=\cup_{i=1}^{N-1} \lambda_i$, where $\lambda_i$ are simple rectifiable curves with finite length having $P_i$ as initial point and $P_N$ as final point, oriented by $\mathcal H^1$-measurable unit vector fields $\tau_i$ satisfying $\tau_i(x)=\tau_j(x)$ for $\mathcal H^1$-a.e. $x\in\, \lambda_i\cap\lambda_j$ (i.e. the orientation of $\lambda_i$ is coherent with that of $\lambda_j$ on their intersection).

For $L \in \mathcal{G}(A)$, if we identify the curves $\lambda_i$ with the vector measures $\Lambda_i=\tau_i\cdot\mathcal{H}^1\res\lambda_i$, all the information concerning this acyclic graph $L$ is encoded in the rank one tensor valued measure $\Lambda=\tau\otimes g\cdot \mathcal H^1\res L$, where the $\mathcal H^1$-measurable vector field $\tau\in\R^d$ carrying the orientation of the graph $L$ satisfies spt$\, \tau=L$,  $|\tau|=1$, $\tau=\tau_i$ $\mathcal H^1$-a.e. on $\lambda_i$, and the $\mathcal H^1$-measurable vector map $g\colon \R^d \to \R^{N-1}$ has components $g_i$ satisfying $g_i\cdot\mathcal H^1\res L=\mathcal H^1\res\lambda_i=|\Lambda_i|$, with $|\Lambda_i|$ the total variation measure of the vector measure $\Lambda_i=\tau\cdot \mathcal H^1\res\lambda_i$. Observe that $g_i\in\{0,1\}$ a.e. for any $1\le i\le N-1$ and moreover that each $\Lambda_i$ verifies the property
\begin{equation}\label{eq:solenoidal}
\div \Lambda_i=\delta_{P_i}-\delta_{P_N}.
\end{equation}

\begin{definition}\label{def:tensor} Given any graph $L \in \mathcal{G}(A)$, we call the above constructed $\Lambda_L \equiv \Lambda=\tau\otimes g\,\cdot\,\mathcal H^1\res L$ the canonical rank one tensor valued measure representation of the acyclic graph $L$.
\end{definition}

To any compact connected set $K\supset A$ with $\mathcal H^1(K)<+\infty$, i.e. to any candidate minimizer for (STP), we may associate in a canonical way an acyclic graph $L \in \mathcal{G}(A)$ connecting $\{ P_1, \dots, P_N \}$ such that $\mathcal H^1(L)\le\mathcal H^1(K)$ (see e.g. Lemma 2.1 in \cite{MaMa}). Given such a graph $L \in \mathcal{G}(A)$ canonically represented by the tensor valued measure $\Lambda$, the measure $\mathcal H^1\res L$ corresponds to the smallest positive measure dominating $\mathcal H^1\res\lambda_i$ for $1\le i\le N-1$.
It is thus given by $\mathcal H^1\res L=\sup_{i}\mathcal H^1\res \lambda_i=\sup_i |\Lambda_i|$, the supremum of the total variation measures $|\Lambda_i|$. We recall that, for any nonnegative $\psi\in C^0_c(\R^d)$, we have
$$
\int_{\R^d}\psi\, d\left( \sup_i |\Lambda_i| \right)=\sup \left\{ \sum_{i=1}^{N-1}\int_{\R^d}\phi_i\, d|\Lambda_i|\, , \ \phi_i\in C^0_c(\R^d), \ \sum_{i=1}^{N-1}\phi_i(x)\le \psi(x)\right\}.
$$

\begin{remark}[graphs as $G$-currents]\label{rem:tensor}{\rm  In \cite{MaMa}, the rank one tensor measure $\Lambda=\tau\otimes g\cdot \mathcal H^1\res L$ identifying a graph in $\R^d$ is defined as a {\em current} with coefficients in the group $\mathbb{Z}^{N-1}\subset \R^{N-1}$. For $\omega\in \mathcal D^1(\R^d)$ a smooth compactly supported differential 1-form and $\vec\phi=(\phi_1,...,\phi_{N-1})\in \mathcal [\mathcal D(\R^d)]^{N-1}$ a smooth test (vector) function, one sets
$$
\begin{aligned}
\langle \Lambda, \omega\otimes\vec\phi\rangle &:=\int_{\R^d}\langle\omega\otimes\vec\phi, \tau\otimes g \rangle\, \,d\mathcal{H}^1\res L=\sum_{i=1}^{N-1}\int_{\R^d}\langle\omega,\tau\rangle\phi_ig_i\,d\mathcal{H}^1\res L \\
&=\sum_{i=1}^{N-1}\int_{\R^d}\langle\omega,\tau\rangle\phi_i\, d|\Lambda_i|\, .
\end{aligned}
$$
Moreover, fixing a norm $\Psi$ on $\R^{N-1}$,  one may define the $\Psi$-mass of the current $\Lambda$ as it is done in \eqref{eq:psitv}. In \cite{MaMa} the authors show that classical integral currents, i.e. $G=\mathbb{Z}$, are not suited to describe (STP) as a mass minimization problem: for example minimizers are not ensured to have connected support.
}
\end{remark}

%
%
\subsection{Irrigation-type functionals}\label{sec:irrigationfunc}

In this section we consider functionals defined on acyclic graphs connecting a fixed set $A=\{P_1,\dots,P_N\}\subset\R^d$, $d\ge 2$, by using their canonical representation as rank one tensor valued measures, in order to identify the graph with an {\em irrigation plan} from the point sources $\{P_1,\dots,P_{N-1}\}$ to the target point $P_N$. We focus here on suitable energies in order to describe the irrigation problem and the Steiner tree problem in a common framework as  in \cite{MaMa,MaMa2}. We observe moreover that the irrigation problem with one point source $(I_\alpha)$ introduced by Xia \cite{Xia}, in the equivalent formulation of \cite{MaMa2}, approximates the Steiner tree problem as $\alpha\to 0$ in the sense of $\Gamma$-convergence (see Proposition \ref{prop:alphatozero}).

Consider on $\R^{N-1}$ the norms $\Psi_\alpha=|\cdot|_{\ell^{1/\alpha}}$ (for $0<\alpha\le 1$) and $\Psi_0=|\cdot|_{\ell^\infty}$. Let $\Lambda=\tau\otimes g \cdot \mathcal H^1\res L$ be the canonical representation of an acyclic graph $L \in \mathcal{G}(A)$, so that we have $|\tau|=1$, $g_i\in\{0,1\}$ for $1\le i\le N-1$ and hence $|g|_\infty = 1$ $\mathcal H^1$-a.e. on $L$. Let us define for such $\Lambda$ and any $\alpha \in [0,1]$ the functional
\[
\mathcal F^\alpha(\Lambda) := ||\Lambda||_{\Psi_\alpha} = |\Lambda|_{\Psi_\alpha}(\R^d).
\]
Observe that, by \eqref{eq:psitv},
\[
\mathcal F^0(\Lambda) = \int_{\R^d} |\tau||g|_\infty \,d\mathcal H^1\res L = \mathcal H^1(L)
\]
and
\begin{equation}\label{eq:Falpha}
\mathcal F^\alpha(\Lambda)=\int_{\R^d} |\tau||g|_{1/\alpha}\,d\mathcal H^1\res L=\int_L |\theta|^\alpha d\mathcal H^1 \, ,
\end{equation}
where $\theta(x)=\sum_i g_i(x)^{1/\alpha}=\sum_i g_i(x)\in\mathbb{Z}$, and $0\le\theta(x)\le N-1$. We thus recognize that minimizing the functional $\mathcal F^\alpha$ among graphs $L$ connecting $P_1,\dots,P_{N-1}$ to $P_N$ solves the irrigation problem $(I_\alpha)$ with unit mass sources $P_1,\dots,P_{N-1}$ and target $P_N$ (see \cite{MaMa2}), while minimizing $\mathcal F^0$ among graphs $L$ with endpoints set $\{P_1,\dots,P_N\}$ solves (STP) in $\R^d$.

Since both $\mathcal F^\alpha$ and $\mathcal F^0$ are mass-type functionals, minimizers do exist in the class of rank one tensor valued measures. The fact that the minimization problem within the class of canonical tensor valued measures representing acyclic graphs has a solution in that class is a consequence of compactness properties of Lipschitz maps (more generally by compactness theorem for $G$-currents \cite{MaMa}; in $\R^2$ it follows alternatively by the compactness theorem in the $SBV$ class \cite{AFP}). Actually, existence of minimizers in the canonically oriented graph class in $\R^2$ can be deduced  as a byproduct of our convergence result (see Proposition \ref{cor:minconpsi} and Theorem \ref{thm:main}) and in $\R^d$, for $d>2$, by the parallel $\Gamma$-convergence analysis contained in the companion paper \cite{BoOrOu}. 

\begin{remark}\label{rem:mincanonic}{\rm A minimizer of $\mathcal F^0$ (resp. $\mathcal F^\alpha$) among tensor valued measures $\Lambda$ representing admissible graphs corresponds necessarily to the canonical representation of a minimal graph, i.e. $g_i \in \{0,1\}$ $\forall\, 1\le i\le N-1$. Indeed since $g_i \in \mathbb{Z}$, if $g_i \neq 0$, we have $|g_i|\geq 1$, hence $g_i \in \{-1,0,1\}$ for minimizers. Moreover if $g_i=-g_j$ on a connected arc in $\lambda_i\cap\lambda_j$,
with $\lambda_i$ going from $P_i$ to $P_N$ and $\lambda_j$ going from $P_j$ to $P_N$, this implies that $\lambda_i\cup\lambda_j$ contains a cycle and $\Lambda$ cannot be a minimizer. Hence, up to reversing the orientation of the graph, $g_i \in \{0,1\}$ for all $1\leq i \leq N-1$.
}
\end{remark}

We conclude this section by observing in the following proposition that the Steiner tree problem can be seen as the limit of irrigation problems. 

\begin{proposition}\label{prop:alphatozero} The functional $\mathcal F^0$ is the $\Gamma$-limit, as $\alpha\to 0$,  of the functionals $\mathcal F^\alpha$ with respect to the convergence of measures.

\end{proposition}

\begin{proof}
Let $\Lambda=\tau \otimes g \cdot \mathcal{H}^1\res L$ be the canonical representation of an acyclic graph $L \in \mathcal{G}(A)$, so that $|\tau|=1$ and $g_i \in \{0,1\}$ for all $i = 1,\dots,N-1$. The functionals $\mathcal F^\alpha(\Lambda)=\int_{\R^d}|g|_{1/\alpha}d\mathcal{H}^1\res L$ generates a monotonic decreasing sequence as $\alpha\to 0$, because $|g|_{p}\le|g|_{q}$ for any $1\le q<p\le +\infty$, and moreover $\mathcal F^\alpha(\Lambda) \to \mathcal F^0(\Lambda)$ because $|g|_{q}\to|g|_{\infty}$ as $q\to +\infty$. Then, by elementary properties of $\Gamma$-convergence (see for instance Remark 1.40 of \cite{Br}) we have $\mathcal F^\alpha\xrightarrow{\Gamma} \mathcal F^0$ . 
\end{proof}

%
%
\subsection{Acyclic graphs and partitions of $\R^2$}
This section is dedicated to the two-dimensional case. 
The aim is to provide an equivalent formulation of (STP) and $(I_\alpha)$ in term of an optimal partition type problem. The equivalence of (STP) with an optimal partition problem has been already studied in the case $P_1,\dots,P_N$ lie on the boundary of a convex set, see for instance \cite{AmBr, AmBr2} and Remark \ref{rem:partitionconvexe}.

To begin we state a result saying that two acyclic graphs having the same endpoints set give rise to a partition of $\R^2$, in the sense that their oriented difference corresponds to the orthogonal distributional gradient of  a piecewise integer valued function having bounded total variation, which in turn determines the partition (see \cite{AFP}). This is actually an instance of the constancy theorem for currents or the Poincar\'e's lemma for distributions (see \cite{FeBook}).

\begin{lemma}\label{lem:partition} Let $\{P, R\}\subset\R^2$ and let $\lambda$, $\gamma$ be simple rectifiable curves from $P$ to $R$ oriented by $\mathcal H^1$-measurable unit vector fields $\tau'$, $\tau''$. Define as above $\Lambda = \tau' \cdot \mathcal{H}^1\res \lambda$ and $\Gamma = \tau'' \cdot \mathcal{H}^1\res \gamma$.

Then there exists a function $u\in SBV(\R^2;\mathbb{Z})$ such that, denoting $Du$ and $Du^{\bot}$ respectively the measures representing the gradient and the orthogonal gradient of $u$, we have $Du^\bot=\Gamma-\Lambda$.
\end{lemma}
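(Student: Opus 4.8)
The plan is to show that $\Gamma - \Lambda$ is a closed $1$-current (equivalently, a divergence-free vector measure in the plane) and then invoke the planar Poincar\'e lemma to write it as an orthogonal gradient. Since both $\lambda$ and $\gamma$ run from $P$ to $R$, the associated vector measures satisfy $\textup{div}\,\Lambda = \textup{div}\,\Gamma = \delta_P - \delta_R$ in the distributional sense, so their difference $T := \Gamma - \Lambda$ has $\textup{div}\,T = 0$. In the language of currents $T$ is a compactly supported $1$-dimensional integral current with $\partial T = 0$ (the boundary point masses cancel), i.e. a $1$-cycle in $\R^2$.

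The key step is to promote this divergence-free condition to the existence of a potential. In $\R^2$ the map $V \mapsto V^\bot$ (rotation by $\pi/2$) turns a divergence-free field into a curl-free one, so formally $T^\bot$ is closed and hence exact: $T = Du^\bot$ for some scalar $u$ with $Du = T^\bot$. Concretely, I would identify the vector measure $T$ with the $1$-form $\omega_T$ obtained by contraction with the area form; the condition $\textup{div}\,T=0$ is exactly $d\omega_T = 0$, and since $\R^2$ is contractible the Poincar\'e lemma (in its distributional/current version, as cited via \cite{FeBook}) gives a function $u$ with $du = \omega_T$, which unwinds to $Du^\bot = T = \Gamma - \Lambda$. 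Equivalently, one defines $u(x)$ as the (oriented) winding/intersection number of a generic ray from $x$ to infinity with the cycle $\lambda \cup \gamma$, i.e. the signed number of crossings, which is well defined up to an additive constant precisely because $T$ is a cycle.

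It then remains to verify the regularity claim $u \in SBV(\R^2;\mathbb{Z})$. Because $\lambda$ and $\gamma$ are simple rectifiable curves of finite length, the union $\lambda \cup \gamma$ is a compact rectifiable set of finite $\mathcal H^1$-measure; the intersection-number function $u$ is integer valued, constant on each connected component of the complement, and jumps by $\pm 1$ across the curves. Hence $Du$ is carried by $\lambda\cup\gamma$ with $\mathcal H^1$-finite jump part and no absolutely continuous or Cantor part, which is the definition of $SBV$ with integer values (see \cite{AFP}). Coherence of the orientations $\tau'$, $\tau''$ ensures that the jump directions of $u$ match $T^\bot$, so that indeed $Du^\bot = \Gamma - \Lambda$ and not merely up to sign on overlaps.

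The main obstacle I expect is not the existence of the potential, which is soft, but the careful bookkeeping on the overlap $\lambda \cap \gamma$: where the two curves coincide as sets, the measures $\Lambda$ and $\Gamma$ may cancel (if the orientations agree) so that $T$ charges only the symmetric difference, and one must check that the winding-number construction produces jumps consistent with this cancellation and yields a genuinely integer-valued $SBV$ function with the claimed orthogonal-gradient identity. Making the heuristic ``$u$ counts crossings'' rigorous as an $SBV$ statement — i.e. controlling the jump set and its multiplicities via the rectifiability and finite length of the curves — is where the real work lies.
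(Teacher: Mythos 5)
Your soft step is fine and is in fact the abstract reason the paper itself cites before the lemma (constancy theorem / Poincar\'e lemma for distributions): since $\textup{div}\,\Lambda=\textup{div}\,\Gamma=\delta_P-\delta_R$, the difference $T=\Gamma-\Lambda$ is divergence free and compactly supported, so there is a potential $u$ with $Du^\bot=T$, and since $T$ is a finite measure, $u\in BV$. But the content of the lemma is the regularity and integrality claim $u\in SBV(\R^2;\mathbb Z)$, and this is precisely the part you assert rather than prove: you state that the ``intersection-number function'' is integer valued, locally constant off $\lambda\cup\gamma$, and jumps by $\pm1$ across the curves, and then you concede that making this rigorous for general rectifiable (non-polygonal) curves, including the bookkeeping on $\lambda\cap\gamma$, ``is where the real work lies.'' That is a genuine gap, not a detail: for a mere rectifiable cycle the winding/crossing-number picture is not classical (rays can meet the curves in complicated sets, overlaps may produce jumps of $\pm2$ or cancellation, and integer-valuedness of $u$ itself does not follow just from knowing the jump heights without an additional slicing or connectivity argument). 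Your proposal contains no device to close this.

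The paper closes it by an approximation-and-compactness argument that your proposal does not contain: it replaces $\lambda,\gamma$ by simple polygonal curves $\lambda_k,\gamma_k$ with the same endpoints, Hausdorff-close and with converging lengths, meeting transversally in finitely many points; for the closed polygonal cycle $\sigma_k=\lambda_k\cup\gamma_k$ the index $u_k(x)=\textup{Ind}_{\sigma_k}(x)$ is classically (Rudin, Thm.\ 10.10) integer valued, locally constant, zero at infinity, and satisfies $Du_k^\bot=\Gamma_k-\Lambda_k$ exactly; then $|Du_k|(\R^2)=\mathcal H^1(\sigma_k)$ is equibounded, Poincar\'e's inequality and Rellich compactness give $u_k\to u$ in $L^1$ with $u\in SBV(\R^2;\mathbb Z)$, and $Du^\bot=\Gamma-\Lambda$ in the limit. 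If you prefer to keep your direct route, you would instead have to argue (i) that $Du=T^\bot$ is concentrated on the $\mathcal H^1$-finite set $\lambda\cup\gamma$, which rules out the absolutely continuous and Cantor parts; (ii) that on $\lambda\cap\gamma$ the approximate tangents of the two curves coincide a.e., so the density of $\Gamma-\Lambda$ is an integer multiple of the tangent and all jump heights are integers; and (iii) an integer-valuedness argument for $u$ itself, e.g.\ by one-dimensional slicing with the normalization $u=0$ on the unbounded component. None of these steps appear in your write-up, so as it stands the proof is incomplete exactly at the point the lemma is about.
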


\begin{proof}
Consider simple oriented polygonal curves $\lambda_{k}$ and $\gamma_{k}$ connecting $P$ to $R$ such that the Hausdorff distance to respectively $\lambda$ and $\gamma$ is less than $\frac{1}{k}$ and the length of $\lambda_{k}$ (resp. $\gamma_{k}$) converges to the length of $\lambda$ (resp. $\gamma$). We can also assume without loss of generality that $\lambda_k$ and $\gamma_k$ intersect only transversally in a finite number of points $m_k\geq 2$. Let $\tau'_k$, $\tau''_k$ be the $\mathcal H^1$-measurable unit vector fields orienting $\lambda_k$, $\gamma_k$ and define the measures $\Lambda_k = \tau'_k \cdot \mathcal H^1\res\lambda_k$ and $\Gamma_k = \tau_k'' \cdot \mathcal H^1\res \gamma_k$.

For a given $k \in \mathbb N$ consider the closed polyhedral curve $\sigma_{k}=\lambda_{k}\cup\gamma_{k}$ oriented by $\tau_k = \tau_k'-\tau_k''$ (i.e. we reverse the orientation of $\gamma_k$). For every $x \in \R^2\setminus \sigma_k$ let us consider the index of $x$ with respect to $\sigma_k$ (or winding number) and denote it as
\[
u_k(x) = \text{Ind}_{\sigma_k} (x) = \frac{1}{2\pi\textup{i}} \oint_{\sigma_k} \frac{dz}{z-x}.
\]
By Theorem 10.10 in \cite{RuBook}, the function $u_k$ is integer valued and constant in each connected component of $\R^2\setminus\sigma_k$ and vanishes in the unbounded one. Furthermore, for a.e. $x \in \sigma_k$ we have
\[
\lim_{\eps\to0^+} u_k(x + \eps\tau_k(x)^\bot) - \lim_{\eps\to0^-} u_k(x + \eps\tau_k(x)^\bot) = 1,
\]
i.e. $u_k$ has a jump of $+1$ whenever crossing $\sigma_k$ from ``right'' to ``left'' (cf \cite{RoeBook}, Lemma 3.3.2). This means that
\[
Du_k^\bot = -\tau_k \cdot \mathcal{H}^1\res \sigma_k = \Gamma_k - \Lambda_k.
\]
Thus, $|Du_{k}|(\R^2)=\mathcal H^1(\sigma_{k})$ and $\|u_{k}\|_{L^1(\R^2)}\le C|Du_{k}|(\R^2)$ by Poincar\'e's inequality in $BV$. Hence $u_{k}\in SBV(\R^2;\mathbb{Z})$ is an equibounded sequence in norm, and by Rellich compactness theorem there exists a subsequence still denoted $u_{k}$ converging in $L^1(\R^2)$ to a $u\in SBV(\R^2;\mathbb{Z})$. Taking into account that we have $Du_k^\bot = \Gamma_k - \Lambda_k$, we deduce in particular that $Du^\bot = \Gamma - \Lambda$ as desired.
\end{proof}

\begin{remark}\label{rem:graphgamma}{\rm
Let $A\subset\R^2$ as above. For $i=1,...,N-1$ let $\gamma_i$ be the segment joining $P_i$ to $P_N$, denote $\tau_i=\frac{P_N-P_i}{|P_N-P_i|}$ its orientation, and identify $\gamma_i$ with the vector measure $\Gamma_i=\tau_i\cdot\mathcal{H}^1\res\gamma_i$. Then $G=\cup_{i=1}^{N-1}\gamma_i$ is an acyclic graph connecting the endpoints set $A$ and $\mathcal H^1(G)=(\sup_i |\Gamma_i|)(\R^2)$.
}
\end{remark}


Given the set of terminal points $A=\{P_1, \dots , P_N\} \subset \R^2$ let us fix some $G\in\mathcal{G}(A)$ (for example the one constructed in Remark \ref{rem:graphgamma}). For any acyclic graph $L\in\mathcal{G}(A)$, denoting $\Gamma$ (resp. $\Lambda$) the canonical tensor valued representation of $G$ (resp. $L$), by means of Lemma \ref{lem:partition} we have
\begin{equation} \label{eq:drift}
\H^1(L)=\int_{\R^2} \sup_{i} |\Lambda_i| = \int_{\R^2} \sup_i |Du_i^\bot-\Gamma_i|\,
\end{equation}
for suitable $u_i\in SBV(\R^2;\mathbb{Z})$, $1\le i\le N-1$. Thus, using the family of measures $\Gamma = (\Gamma_1, \dots, \Gamma_{N-1})$ of Remark \ref{rem:graphgamma}, we are led to consider the minimization problem for $U \in SBV(\R^2;\mathbb{Z}^{N-1})$ for the functional
\begin{equation}\label{eq:F0full}
F^0(U) = |DU^\bot - \Gamma|_{\Psi_0}(\R^2) = \int_{\R^2} \sup_i |Du_i^\bot-\Gamma_i|.
\end{equation}

\begin{proposition}\label{prop:minF0}
There exists $U \in SBV(\R^2; \mathbb
Z^{N-1})$ such that
\[
F^0(U) = \inf_{V \in SBV(\R^2; \mathbb
Z^{N-1})} F^0(V).
\]
Moreover $\textup{spt}\, U \subset \Omega = \{ x \in \R^2 \,:\, |x|<10\max_i|P_i|\}$.
\end{proposition}

\begin{proof}
Observe first that for any $U \in SBV(\R^2;\mathbb{Z}^{N-1})$ with $F^0(U) < \infty$, we can find $\tilde U$ s.t. $F^0(\tilde U) \leq F^0(U)$ and $\textup{spt}\,\tilde U \subset \Omega$.
Indeed, consider $r = 8\max_i|P_i|$, $\chi = \mathbf{1}_{B_r(0)}$ and $\tilde U = (\chi u_1, \dots, \chi u_{N-1})$. One has, for $1 \leq i \leq N-1$,
\[
\int_{\R^2\setminus B_r(0)} |D\tilde u_i| = \int_{\partial B_r(0)} |u_i^+|
\]
where $u^+_i$ is the trace on $\partial B_r(0)$ of $u_i$ restricted to $B_r(0)$, and
\[
\begin{aligned}
&\int_{\R^2} |D\tilde u_i^\bot - \Gamma_i| = \int_{B_r(0)} |D u_i^\bot - \Gamma_i| + \int_{\partial B_r(0)} |u_i^+| \\
&\leq \int_{B_r(0)} |Du_i^\bot - \Gamma_i| + \int_{\R^2\setminus B_r(0)} |Du_i|
= \int_{\R^2} |Du_i^\bot - \Gamma_i|
\end{aligned}
\]
for any $i = 1,\dots,N-1$, i.e. $F^0(\tilde U) \leq F^0(U)$.

Consider now a minimizing sequence $U^k \in SBV(\R^2; \mathbb{Z}^{N-1})$ of $F^0$. We may suppose w.l.o.g. $\textup{spt}(U^k) \subset \Omega$, so that, for any $1 \leq i \leq N-1$,
\[
|Du_i^k|(\Omega) \leq |Du_i^k-\Gamma_i|(\Omega) + \mathcal H^1(G) \leq F^0(U^k) + \mathcal H^1(G) \leq 3\mathcal H^1(G)
\]
for $k$ sufficiently large. Hence $U^k$ is uniformly bounded in $BV$ by Poincar\'e inequality on $\Omega$, so that it is compact in $L^1(\Omega; \R^{N-1})$ and, up to a subsequence, $U^k \to U$ a.e., whence $U \in SBV(\Omega; \mathbb{Z}^{N-1})$, $\textup{spt}\,U \subset \Omega$ and $U$ minimizes $F^0$ by lower semicontinuity of the norm.
\end{proof}

We have already seen that to each acyclic graph $L\in\mathcal G(A)$ we can associate a function $U \in SBV(\R^2;\mathbb Z^{N-1})$ such that $\mathcal H^1(L)= F^0(U)$. On the other hand, for minimizers of $F^0$, we have the following
\begin{proposition}\label{rem:minrank1}
Let $U\in SBV(\R^2;\mathbb{Z}^{N-1})$ be a minimizer of $F^0$, then there exists an acyclic graph $L \in \mathcal{G}(A)$ connecting the terminal points $P_1, \dots, P_N$ and such that ${F}^0(U) = \mathcal{H}^1(L)$.
\end{proposition}

\begin{proof}
Let $U=(u_1,\dots,u_{N-1})$ be a minimizer of $F^0$ in $SBV(\R^2;\mathbb{Z}^{N-1})$, and denote $\Lambda_i = \Gamma_i - Du_i^\bot$. Observe that each $Du_i$ has no absolutely continuous part with respect to the Lebesgue measure (indeed $u_i$ is piecewise constant being integer valued) and so $\Lambda_i=\tau_i\cdot \mathcal H^1\res\lambda_i$ for some $1$-rectifiable set $\lambda_i$ and $\mathcal{H}^1$-measurable vector field $\tau_i$. Since we have $\div\Lambda_i = \delta_{P_i}-\delta_{P_N}$, $\lambda_i$ necessarily contains a simple rectifiable curve $\lambda'_i$ connecting $P_i$ to $P_N$  (use for instance the decomposition theorem for rectifiable $1$-currents in cyclic and acyclic part, as it is done in \cite{MaMa2}, or the Smirnov decomposition of solenoidal vector fields \cite{Sm}).

Consider now the canonical rank one tensor measure $\Lambda'$ associated to the acyclic subgraph $L'=\lambda_1'\cup \dots \cup\lambda_{N-1}'$ connecting $P_1,\dots,P_{N-1}$ to $P_N$. Then by Lemma \ref{lem:partition}, there exists $U'=(u'_1,\dots,u'_{N-1})\in SBV(\R^2;\mathbb{Z}^{N-1})$ such that
${Du'_i}^\bot=\Gamma_i-\Lambda_i'$ and in particular $F^0(U') = \mathcal{H}^1(L') \leq \mathcal{H}^1(L) \leq F^0(U)$. We deduce $\mathcal H^1(L') = \mathcal H^1(L)$, hence $L' = L$, $L$ is acyclic and $H^1(L) = F^0(U)$.
\end{proof}


\begin{remark}\label{rem:minstructureFpsi}{\rm
We have shown the relationship between (STP) and the minimization of $ F^0$ over functions in $SBV(\R^2; \mathbb Z^{N-1})$, namely
$$\inf \{F^0(U)\,:\, U \in SBV(\R^2;\mathbb{Z}^{N-1}) \} = \inf \{ \mathcal{F}^0(\Lambda_L) \, : \,L \in \mathcal{G}(\{P_1,\dots,P_N\}) \}.$$
A similar connection can be made between the $\alpha$-irrigation problem $(I_\alpha)$ and minimization over $SBV(\R^2; \mathbb Z^{N-1})$ of
\begin{equation}\label{eq:Fafull}
F^\alpha(U) = |DU^\bot - \Gamma|_{\Psi_\alpha}(\R^2),
\end{equation}
namely we have
$$\inf \{F^\alpha(U)\,:\, U \in SBV(\R^2;\mathbb{Z}^{N-1}) \} = \inf \{ \mathcal{F}^\alpha(\Lambda_L) \, : \,L \in \mathcal{G}(\{P_1,\dots,P_N\}) \},$$
where $\mathcal{F}^\alpha$ is defined in equation \eqref{eq:Falpha}. Indeed, given a norm $\Psi$ on $\R^{N-1}$ and $F^\Psi(U) = |DU^\bot - \Gamma|_{\Psi}(\R^2)$ for $U \in SBV(\R^2;\mathbb{Z}^{N-1})$, the proofs of Propositions \ref{prop:minF0} and \ref{rem:minrank1} carry over to this general context: there exists $U \in SBV(\R^2;\mathbb{Z}^{N-1})$ realizing $\inf F^\Psi$, with $\textup{spt}\, U \subset \Omega$ and $DU^\bot - \Gamma = \Lambda_L$ with $\Lambda_L = \tau \otimes g \cdot \mathcal{H}^1\res L$ the canonical representation of an acyclic graph $L \in \mathcal{G}(\{P_1,\dots,P_N\})$.
}
\end{remark}

\begin{remark}\label{rem:partitionconvexe}{\rm
In the case $P_1,\dots,P_N\in\bd\Omega$ with $\Omega\subset\R^2$ a convex set, we may choose $G=\cup_{i=1}^{N-1}\gamma_i$ with $\gamma_i$ connecting $P_i$ to $P_N$ and spt$\, \gamma_i\subset \partial\Omega$. We deduce by \eqref{eq:drift} that for any acyclic graph $L\in\mathcal{G}(A)$
$$
\H^1(L)=\int_{\Omega} \sup_i |Du_i^\bot|\,
$$
for suitable $u_i\in SBV(\Omega;\mathbb{Z})$ such that (in the trace sense) $u_i=1$ on $\gamma_i\subset\bd\Omega$ and $u_i=0$ elsewhere in $\bd\Omega$, $1\le i\le N-1$. We recover here an alternative formulation of the optimal partition problem in a convex planar set $\Omega$ as studied for instance in \cite{AmBr} and \cite{AmBr2}.
}
\end{remark}

The aim of the next section is then to provide an approximation of minimizers of the functionals $F^\alpha$ (and more generally $F^\Psi$) through minimizers of more regular energies of Modica--Mortola type.

\section{Variational approximation of $F^\alpha$\label{secgamma}}

%
%

In this section we state and prove our main results, namely Proposition \ref{cor:minconpsi} and Theorem \ref{thm:main}, concerning the approximation of minimizers of $F^\alpha$ through minimizers of Modica--Mortola type functionals, in the spirit of $\Gamma$-convergence.

\subsection{Modica--Mortola functionals for functions with prescribed jump}\label{subsec:singlejump}

In this section we consider Modica--Mortola functionals for functions having a prescribed jump part along a fixed segment in $\R^2$ and we prove compactness and lower-bounds for sequences having a uniform energy bound. Let $P, Q \in \R^2$ and let $s$ be the segment connecting $P$ to $Q$. We denote by $\tau_s = \frac{Q-P}{|Q-P|}$ its orientation and define $\Sigma_s = \tau_s \cdot \mathcal H^1\res s$. Up to rescaling, suppose $\max(|P|, |Q|) = 1$ and let $\Omega = B_{10}(0)$ and $\Omega_\delta = \Omega \setminus (B_\delta(P) \cup B_\delta(Q))$ for $0 < \delta \ll |Q-P|$. We consider the Modica--Mortola type functionals
\beq\label{eq:momosingle}
F_\eps(u, \Omega_\delta)=\int_{\Omega_\delta} e_\eps(u)\,dx=\int_{\Omega_\delta}\eps|Du^\bot-\Sigma_s|^2+\frac{1}{\eps}W(u)\,dx,
\eeq
defined for $u\in H_s = \{u \in W^{1,2}(\Omega_\delta\setminus s)\cap SBV(\Omega_\delta) \,:\, u|_{\partial \Omega} = 0\}$, where $W$ is a smooth non negative 1-periodic potential vanishing on $\mathbb{Z}$ (e.g. $W(u)=\sin^2(\pi u)$). Define $H(t) = 2\int_0^t \sqrt{W(\tau)}\,d\tau$ and $c_0=H(1)$.

\begin{remark}\label{rem:finiteenergy}{\rm
Notice that any function $u \in H_s$ with $F_\eps(u,\Omega_\delta) < \infty$ has necessarily a prescribed jump $u^+-u^-=+1$ across $s\res \Omega_\delta$ in the direction $\nu_s = -\tau_s^\bot$ in order to erase the contribution of the measure term $\Sigma_s$ in the energy. We thus have the decomposition
$$Du^\bot = \nabla u^\bot\mathcal L^2 + Ju^\bot= \nabla u^\bot\mathcal L^2 + \Sigma_s\res\Omega_\delta,$$
where $\nabla u \in L^2(\Omega_\delta)$ is the absolutely continuous part of $Du$ with respect to the Lebesgue measure $\mathcal{L}^2$, and $Ju = (u^+-u^-)\nu_s \cdot \mathcal{ H}^1\res s = \nu_s\cdot  \mathcal{H}^1\res s$.
}
\end{remark}

\begin{remark}{\rm Notice that we cannot work directly in $\Omega$ with $F_\eps$ due to summability issues around the points $P$ and $Q$ for the absolutely continuous part of the gradient, indeed there are no functions $u \in W^{1,2}(\Omega\setminus s)$ such that $u^+-u^-=1$ on $s$. To avoid this issue one could consider variants of the functionals $F_\eps(\cdot,\Omega)$ by relying on suitable smoothings $\Sigma_{s,\epsilon}=\Sigma_s*\eta_\eps$ of the measure $\Sigma_s$, with $\eta_\eps$ a symmetric mollifier.
}
\end{remark}

\begin{proposition}[Compactness]\label{prop:singlecomp}
For any sequence $\{u_{\eps}\}_\eps \subset H_s$ such that $F_\eps(u_{\eps},\Omega_\delta)\le C$, there exists $u\in SBV(\Omega_\delta;\mathbb{Z})$ such that (up to a subsequence)  $u_{\eps}\to u$ in $L^1(\Omega_\delta)$.
\end{proposition}

\begin{proof}
By Remark \ref{rem:finiteenergy} we have $Du_\eps^\bot = \nabla u_\eps^\bot\mathcal{L}^2 + \Sigma_s\res \Omega_\delta$, and using the classical Modica--Mortola trick one has
\[
\begin{aligned}
C &\geq \int_{\Omega_\delta} \eps |Du_\eps^\bot-\Sigma_s|^2 + \frac1\eps W(u_\eps) \,dx \\
&= \int_{\Omega_\delta} \eps |\nabla u_\eps^\bot|^2 + \frac1\eps W(u_\eps) \,dx \geq 2\int_{\Omega_\delta} \sqrt{W(u_\eps)} |\nabla u_\eps|\,dx.
\end{aligned}
\]
Recall that $H(t) = 2\int_{0}^{t}\sqrt{W(\tau)}\,d\tau$ and $c_0 = H(1)$. By the chain rule, we have
\[
\begin{aligned}
|D(H\circ u_\eps)|(\Omega_\delta) &= 2\int_{\Omega_\delta}\sqrt{W(u_\eps)} |\nabla u_\eps|\,dx + \int_{s}\left(H(u_\eps^+)-H(u_\eps^-)\right) d\mathcal{H}^1(x) \\
&\leq C + c_0\mathcal{H}^1(s).
\end{aligned}
\]
We also have $(H\circ u_\eps)|_{\partial\Omega} = 0$ since $u_\eps$ vanishes on $\partial\Omega$, so that, by the Poincar\'e inequality, $\{ H \circ u_\eps \}_\eps$ is an equibounded sequence in $BV(\Omega_\delta)$, thus compact in $L^1(\Omega_\delta)$. In particular, there exists $v\in L^1(\Omega_\delta)$ such that, up to a subsequence, $H\circ u_\eps\to v$ in $L^1(\Omega_\delta)$ and pointwise a.e. Since $H$ is a strictly increasing continuous function with $c_0(t-1) \leq H(t) \leq c_0(t+1)$ for any $t \in \R$, then $H^{-1}$ is uniformly continuous and $|H^{-1}(t)|\le c_0^{-1}(|t|+1)$ for all $t\in\R$. Hence, up to a subsequence, the family $\{u_\eps\}_\eps\subset L^1(\Omega_\delta)$ is pointwise convergent a.e. to $u=H^{-1}(v)\in L^1(\Omega_\delta)$. By Egoroff's Theorem, for any $\sigma>0$ there exists a measurable $E_\sigma \subset \Omega_\delta$, with $|E_\sigma|<\sigma$, such that $u_\eps\to u$ uniformly in $\Omega_\delta\setminus E_\sigma$. Then, taking into account that $|t|\leq c_0^{-1}(|H(t)|+1)$ for all $t \in \R$, we have
\[
\begin{aligned}
|| u_\eps-u||_{L^1(\Omega_\delta)} &\le ||u_\eps-u||_{L^1(\Omega_\delta\setminus E_\sigma)}+\int_{E_\sigma}(|u_\eps| + |u|)\,dx \\
&\le |\Omega|\, || u_\eps-u||_{L^\infty(\Omega_\delta\setminus E_\sigma)}+2c_0^{-1}|E_\sigma|+c_0^{-1}\int_{E_\sigma} (|H\circ u_\eps| + |v|)\,dx
\end{aligned}
\]
and for $\eps$, $\sigma$ small enough the right hand side can be made arbitrarily small thanks to the uniform integrability of the sequence $\{H\circ u_\eps \}_\eps$. Hence $u_\eps \to u$ in $L^1(\Omega_\delta)$.
Furthermore, by Fatou's lemma we have
\[
\int_{\Omega_\delta} W(u) \, dx \leq \liminf_{\eps\to 0} \int_{\Omega_\delta} W(u_\eps) \, dx \leq \liminf_{\eps\to 0} \eps F_\eps(u_\eps, \Omega_\delta) = 0,
\]
whence $u(x) \in \mathbb{Z}$ for a.e. $x \in \Omega_\delta$. Finally we have
\[
c_0|Du|(\Omega_\delta) = |D(H\circ u)|(\Omega_\delta) \leq \liminf_{\eps\to 0} |D(H\circ u_\eps)|(\Omega_\delta) \leq C + c_0\mathcal{H}^1(s),
\]
i.e. $u \in SBV(\Omega_\delta; \mathbb{Z})$.
\end{proof}

\begin{proposition}[Lower-bound inequality]\label{prop:singleliminf}
Let $\{u_{\eps}\}_\eps \subset H_s$ and $u\in SBV(\Omega_\delta;\mathbb{Z})$ such that $u_{\eps}\to u$ in $L^1(\Omega_\delta)$. Then
\beq\label{eq:gammaliminf}
\liminf_{\eps\to 0}  F_\eps(u_\eps, \Omega_\delta) \ge c_0 |Du^\bot-\Sigma_s|(\Omega_\delta).
\eeq
\end{proposition}

\begin{proof}

\emph{Step 1}. Let us prove first that for any open ball $B \subset \Omega_\delta$ we have
\beq \label{eq:gammaliminfballs}
\liminf_{\eps\to 0}  F_\eps(u_\eps, B) \ge c_0 |Du^\bot-\Sigma_s|(B).
\eeq
We distinguish two cases, according to whether $B\cap s=\emptyset$ or not. In the first case we have
$$
F_\eps(u_\eps, B) =\int_{B}\eps|Du_\eps^\bot|^2+\frac{1}{\eps}W(u_\eps)\,dx.
$$
Reasoning as in the proof of Proposition \ref{prop:singlecomp},
\[
c_0|Du|(B) = |D(H\circ u)|(B) \leq \liminf_{\eps\to 0} |D(H\circ u_\eps)|(B) \leq \liminf_{\eps\to 0} F_\eps(u_\eps,B),
\]
and \eqref{eq:gammaliminfballs} follows.

In the case $B\cap s\neq\emptyset$ we follow the arguments of \cite{BaOr}, and consider $u_0={\bf 1}_{B^+}$, where $B^+=\{z\in B\setminus s \,:\, (z-z_0)\cdot\nu_s>0\}$, for $z_0\in B\cap s$ and $\nu_s^\bot=\tau_s$, so that $Du_0^\bot=\Sigma_s\res B$. Letting $v_\eps=u_\eps-u_0$ we have $Dv_\eps^\bot = Du_\eps^\bot-\Sigma_s = \nabla u_\eps^\bot \mathcal{L}^2$, with $\nabla u_\eps \in L^2(B)$ and $W(v_\eps)=W(u_\eps)$ on $B$ by $1$-periodicity of the potential $W$. Hence
$$
F_\eps(u_\eps,B)=\int_{B}\eps|Dv_\eps|^2+\frac{1}{\eps}W(v_\eps)\,dx.
$$
Let $v = u - u_0$, we have
\[
c_0|Du^\bot-\Sigma_s|(B) = c_0|Dv|(B) \leq \liminf_{\eps\to 0} \int_{B}\eps|Dv_\eps|^2+\frac{1}{\eps}W(v_\eps)\,dx = \liminf_{\eps\to 0} F_\eps(u_\eps,B)
\]
and \eqref{eq:gammaliminfballs} follows.


\emph{Step 2}. Since $|Du^\bot-\Sigma_s|$ is a Radon measure, one has
\begin{equation} \label{eq:regularity}
|Du^\bot-\Sigma_s|(\Omega_\delta) = \sup\left\{ \sum_j |Du^\bot-\Sigma_s|(B_j) \right\}
\end{equation}
where the supremum is taken among all finite collections $\{B_j\}_j$ of pairwise disjoint open balls such that $\cup_j B_j \subset \Omega_\delta$. Applying \eqref{eq:gammaliminfballs} to each $B_j$ and summing over $j$ we have
\[
c_0 \sum_j |Du^\bot-\Sigma_s|(B_j) \leq \sum_j \liminf_{\eps\to 0} F_\eps(u_\eps,B_j) \leq \liminf_{\eps\to 0} \sum_j F_\eps(u_\eps,B_j) \leq \liminf_{\eps\to 0} F_\eps(u_\eps,\Omega_\delta)
\]
which gives \eqref{eq:gammaliminf} thanks to \eqref{eq:regularity}.
\end{proof}

\begin{remark}{\label{rem:weightdliminf}\rm
The proof of Proposition \ref{prop:singleliminf} can be easily adapted to prove a weighted version of \eqref{eq:gammaliminf}: in the same hypothesis, for any non negative $\phi \in C^{\infty}_c(\R^d)$ we have
\[
\liminf_{\eps\to 0} \int_{\Omega_\delta} \phi e_\eps(u_\eps)\,dx \ge c_0 \int_{\Omega_\delta} \phi\,d|Du^\bot-\Sigma_s|.
\]
}
\end{remark}

\begin{remark}{\rm
Proposition \ref{prop:singleliminf} holds true also in case the measure $\Sigma_s$ are associated to oriented simple polyhedral (or even rectifiable) finite length curves joining $P$ to $Q$.
}
\end{remark}

\subsection{The approximating functionals $F_\eps^\Psi$}\label{subsec:muljump}

We now consider Modica--Mortola approximations for $\Psi$-mass functionals such as $F^\alpha$. Let $A = \{P_1, \dots, P_N\}$ be our set of terminal points and $\Psi \colon \R^{N-1}\to [0,+\infty)$ be a norm on $\R^{N-1}$. For any $i\in\{1,\dots,N-1\}$ let $\Gamma_i=\tau_i\cdot \mathcal H^1\res\gamma_i$ be the measure defined in Remark \ref{rem:graphgamma}. Without loss of generality suppose $\max_i(|P_i|) = 1$ and define $\Omega = B_{10}(0)$ and $\Omega_\delta = \Omega \setminus \cup_i B_\delta(P_i)$ for $0 < \delta \ll \min_{ij} |P_i-P_j|$. Let
\beq\label{eq:H}
H_i = \{u \in  W^{1,2}(\Omega\setminus\gamma_i)\cap SBV(\Omega) \,:\,u|_{\partial\Omega} = 0\}, \quad H = H_1\times\dots\times H_{N-1},
\eeq
and for $u \in H_i$ define
\beq
e_\eps^i(u) = \eps|Du^\bot-\Gamma_i|^2+\frac{1}{\eps}W(u).
\eeq
Denote $\vec e_\eps(U) = (e_\eps^1(u_1), \dots, e_\eps^{N-1}(u_{N-1}))$ and consider the functionals
\begin{equation}\label{eq:fpsieps}
{F}_\eps^\Psi(U,\Omega_\delta) = |\vec e_\eps (U) \,dx|_{\Psi}(\Omega_\delta),
\end{equation}
or equivalently, thanks to \eqref{eq:psitv},
\begin{equation}\label{eq:fpsiepsdef}
{ F}_\eps^\Psi(U,\Omega_\delta) = \sup_{\substack{\phi \in C^{\infty}_c(\Omega_\delta;\R^{N-1})}} \left\{ \sum_{i=1}^{N-1} \int_{\Omega_\delta} \phi_i e_\eps^i(u_i)\,dx, \quad \Psi^*(\phi(x))\le 1 \right\}.
\end{equation}
The previous compactness and lower-bound inequality for functionals with a single prescribed jump extend to $F_\eps^\Psi$ as follows.

\begin{proposition}[Compactness]\label{prop:comp}
Given $\{U_{\eps}\}_\eps \subset H$ such that $F^\Psi_\eps(U_{\eps},\Omega_\delta)\le C$, there exists $U\in SBV(\Omega_\delta;\mathbb{Z}^{N-1})$ such that (up to a subsequence) $U_{\eps}\to U$ in $[L^1(\Omega_\delta)]^{N-1}$.
\end{proposition}

\begin{proof}
For each $i = 1,\dots,N-1$, by definition of $F_\eps^\Psi$ we have
\[
\int_{\Omega_\delta} e^i_\eps(u_{\eps,i})\,dx \leq \Psi^*(e_i) F_\eps^\Psi(U_\eps,\Omega_\delta) \leq C \Psi^*(e_i)
\]
and the result follows applying Proposition \ref{prop:singlecomp} componentwise.
\end{proof}

\begin{proposition}[Lower-bound inequality]\label{prop:liminf}
Let $\{U_{\eps}\}_\eps \subset H$ and $U\in SBV(\Omega_\delta;\mathbb{Z}^{N-1})$ such that $U_{\eps}\to U$ in $[L^1(\Omega_\delta)]^{N-1}$. Then
\beq\label{eq:gammaliminfpsi}
\liminf_{\eps\to 0}  F_\eps^\Psi(U_\eps, \Omega_\delta) \ge c_0 |DU^\bot-\Gamma|_\Psi(\Omega_\delta).
\eeq
\end{proposition}

\begin{proof}
Fix $\phi \in C^\infty_c(\Omega_\delta; \R^{N-1})$ with $\phi_i \geq 0$ for any $i = 1,\dots,N-1$ and $\Psi^*(\phi(x)) \leq 1$ for all $x \in \Omega_\delta$. By Remark \ref{rem:weightdliminf} we have
\[
\begin{aligned}
c_0 \sum_{i=1}^{N-1}\int_{\Omega_\delta} \phi_i\,d|Du_{i}^\bot - \Gamma_i| &\leq \sum_{i=1}^{N-1} \liminf_{\eps\to 0} \int_{\Omega_\delta} \phi_i e_\eps^i(u_{\eps,i}) \, dx \leq \liminf_{\eps\to 0} \sum_{i=1}^{N-1} \int_{\Omega_\delta} \phi_i e_\eps^i(u_{\eps,i}) \, dx \\
& \leq \liminf_{\eps\to 0} F_\eps^\Psi(U_\eps,\Omega_\delta),
\end{aligned}
\]
and taking the supremum over $\phi$ we get \eqref{eq:gammaliminfpsi}.


\end{proof}

We now state and prove a version of an upper-bound inequality for the functionals $F_\eps^\Psi$ which will enable us to deduce the convergence of minimizers of $F_\eps^\Psi$ to minimizers of $F^\Psi(U,\Omega_\delta) = c_0|DU^\bot-\Gamma|_{\Psi}(\Omega_\delta)$, for $U \in SBV(\Omega_\delta; \mathbb{Z}^{N-1})$.


\begin{proposition}[Upper-bound inequality]\label{prop:limsup}
Let $\Lambda=\tau\otimes g \cdot \mathcal H^1\res L$ be a rank one tensor valued measure canonically representing an acyclic graph $L \in \mathcal{G}(A)$,
and let $U =(u_1,\dots,u_{N-1}) \in SBV(\Omega_\delta;\mathbb{Z}^{N-1})$
such that $Du^\bot_i=\Gamma_i-\Lambda_i$ for any $i=1,\dots,N-1$. Then there exists a sequence $\{U_\eps\}_\eps \subset H$ such that $U_\eps\to U$ in $[L^1(\Omega_\delta)]^{N-1}$ and
\beq\label{eq:gammalimsup}
\limsup_{\eps\to 0} F_\eps^\Psi(U_\eps, \Omega_\delta) \leq c_0 |DU^\bot-\Gamma|_\Psi(\Omega_\delta).
\eeq
\end{proposition}


\begin{figure}
\centering
\includegraphics[width=0.4\linewidth]{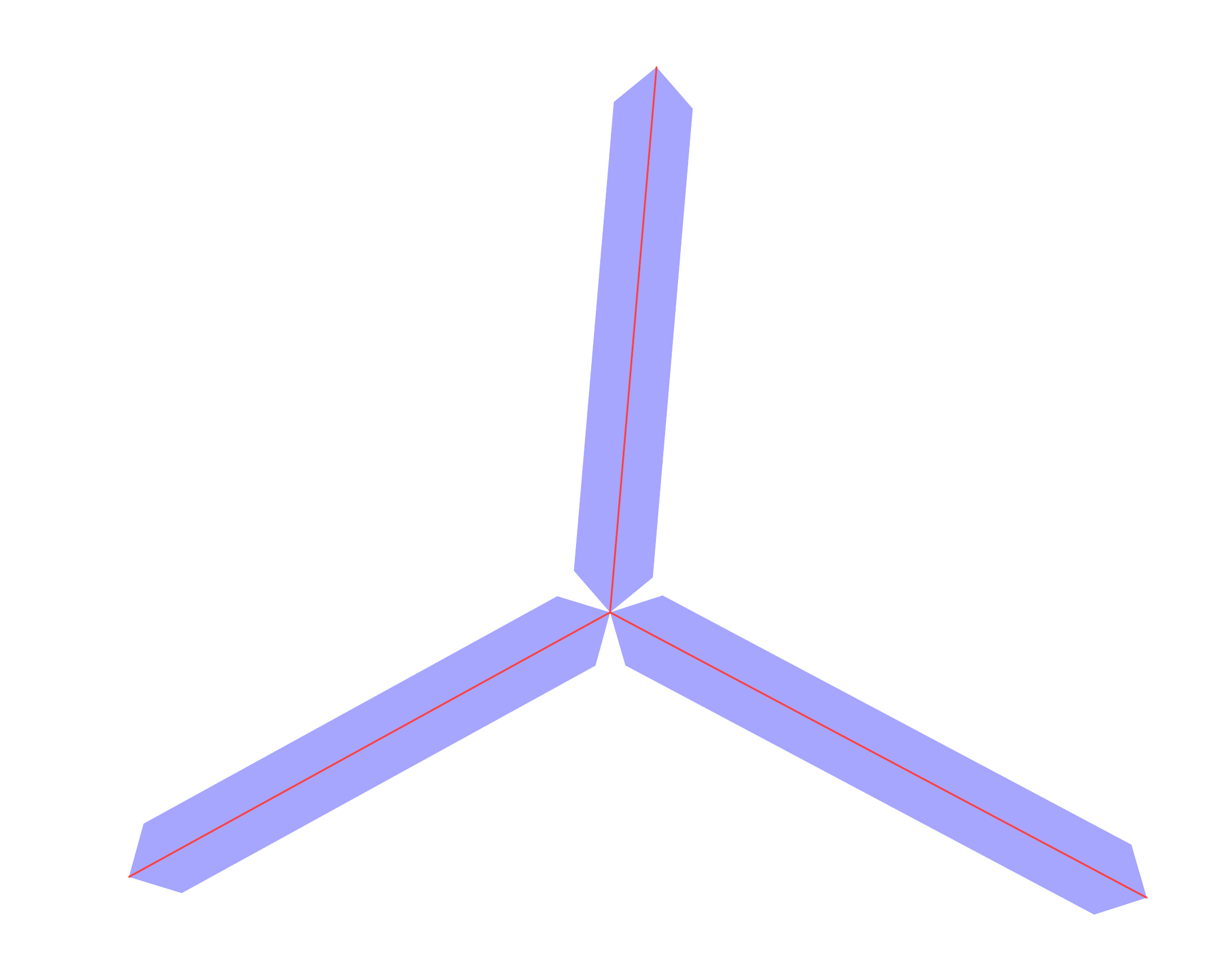}
\qquad
\includegraphics[width=0.45\linewidth]{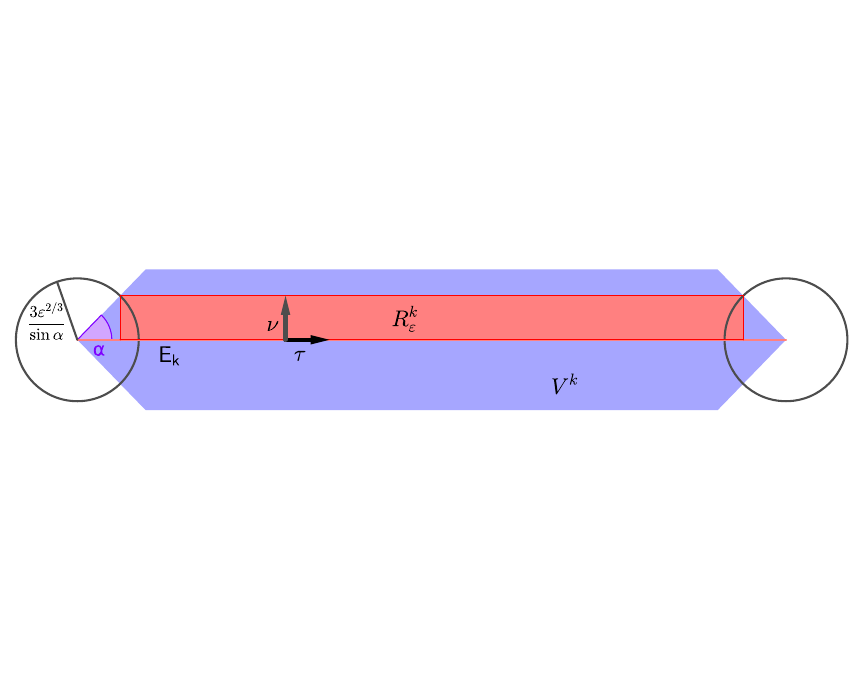}
\caption{Typical shape of the sets $V_k$ (left) and general construction involved in the definition of $R^k_\eps$ (right).}
\label{fig:gammalimsup}
\end{figure}

\begin{proof}
\emph{Step 1.} We consider first the case $\Lambda_i=\tau_i\cdot\mathcal H^1\res\lambda_i$ with $\lambda_i$ a polyhedral curve transverse to $\gamma_i$ for any $1 \leq i < N$. Then the support of the measure $\Lambda$ is an acyclic polyhedral graph (oriented by $\tau$ and with normal $\nu = \tau^\bot$) with edges $E_0, \dots, E_M$ and vertices $\{S_0, \dots, S_\ell\} \nsubseteq (\cup_i\gamma_i) \cap \Omega_\delta$ such that $E_k = [S_{k_1},S_{k_2}]$ for suitable indices $k_1, k_2 \in \{0,\dots,\ell\}$. Denote also $g^k = g|_{E_k} \in \R^{N-1}$ and recall $g^k_i \in \{0,1\}$ for all $1\leq i<N$. By finiteness there exist $\eta>0$ and $\alpha \in (0,\pi/2)$ such that given any edge $E_k$ of that graph the sets
$$
V^k=\{x\in\R^2, {\rm dist}(x, E_k) < \min \{\eta, \, \cos(\alpha)\cdot {\rm dist}(x, S_{k_1}), \, \cos(\alpha)\cdot {\rm dist}(x, S_{k_2})\}\}
$$
are disjoint and their union forms an open neighbourhood of $\cup_i\lambda_i\setminus\{S_0,\dots,S_\ell\}$ (choose for instance $\alpha$ such that $2\alpha$ is smaller than the minimum angle realized by two edges and then pick $\eta$ satisfying $2\eta\tan\alpha < \min_{j} \mathcal{H}^1(E_j)$).


For $0 < \eps \ll \delta$, let $B^m_\eps = \left\{ x \in \R^2 \,:\, |x-S_m| < \frac{3\eps^{2/3}}{\sin\alpha} \right\} $, $B_\eps = \cup_m B_\eps^m$ and define $R^k_\eps \subset V^k$ as
\[
R^k_\eps = \{y + t\nu \,:\, y \in E_k,\, \min\{ {\rm dist}(y, S_{k_1}), {\rm dist}(y, S_{k_2}) \} > 3\eps^{2/3}\cot(\alpha),\, 0 < t \leq 3\eps^{2/3} \}.
\]
Let $\phi_0$ be the optimal profile for the $1$-d Modica--Mortola functional, which solves $\phi_0' = \sqrt{W(\phi_0)}$ on $\R$ and satisfies $\lim_{\tau\to-\infty} \phi_0(\tau) = 0$, $ \lim_{\tau\to\infty} \phi_0(\tau) = 1$ and $\phi_0(0) = 1/2$. Let us define $\tau_\eps = \eps^{-1/3}$, $r_\eps^+ = \phi_0(\tau_\eps)$, $r_\eps^- = \phi_0(-\tau_\eps)$, and
\[
\tilde\phi_\eps(\tau) =
\left\{
\begin{aligned}
& 0 &\quad& \tau < -\tau_\eps-r_\eps^- \\
&\tau + \tau_\eps + r_\eps^- &\quad& -\tau_\eps-r_\eps^- \leq \tau \leq -\tau_\eps \\
& \phi_0(\tau) &\quad& |\tau| \leq \tau_\eps \\
&\tau - \tau_\eps + r_\eps^+ &\quad& \tau_\eps \leq \tau \leq \tau_\eps+1-r_\eps^+ \\
& 1 &\quad& \tau > \tau_\eps+1-r_\eps^+ \\
\end{aligned}
\right.
\]
Observe that $(1-r_\eps^+)$ and $r_\eps^-$ are $o(1)$ as $\eps\to 0$. For $x = y + t\nu \in R^k_\eps$ let us define $\phi_\eps(x) = {\tilde\phi_\eps\left(\frac{t}{\eps} - \tau_\eps - r_\eps^-\right)}$, so that, as $\eps \to 0$,
\[
\begin{aligned}
&\int_{R^k_\eps \cap \Omega_\delta} \eps |D\phi_\eps|^2 + \frac{1}{\eps} W(\phi_\eps)\,dx \leq \mathcal{H}^1(E_k \cap \Omega_\delta) \int_{-\tau_\eps-r_\eps^-}^{2\tau_\eps-r_\eps^-} |D \tilde\phi_\eps(\tau)|^2 + W( \tilde\phi_\eps(\tau)) \,d\tau +o(1) \\
& \leq \mathcal{H}^1(E_k \cap \Omega_\delta) \int_{-\tau_\eps}^{\tau_\eps} 2\phi_0'(\tau)\sqrt{W(\phi_0(\tau))} \,d\tau  + o(1) \leq c_0 \mathcal{H}^1(E_k \cap \Omega_\delta) + o(1).
\end{aligned}
\]
Define, for $x \in \Omega_\delta \setminus B_\eps$,
\[
u_{\eps,i}(x) =
\left\{
\begin{aligned}
& u_i(x) + \phi_\eps(x) - 1 &\quad& \text{if } x \in (R^k_\eps \setminus B_\eps) \cap \Omega_\delta \text{ whenever } E_k \subset \lambda_i\\
& u_i(x) &\quad& \text{elsewhere on } \Omega_\delta \setminus B_\eps \\
\end{aligned}
\right.
\]
and on $B_\eps \cap \Omega_\delta$ define $u_{\eps,i}$ to be a Lipschitz extension of $u_{\eps,i}|_{\partial(B_\eps \cap \Omega_\delta)}$ with the same Lipschitz constant, which is of order $1/\eps$. Remark that $u_{\eps,i}$ has the same prescribed jump as $u_i$ across $\gamma_i$, and thus $F^\Psi_\eps(U_\eps,\Omega_\delta) < \infty$. Moreover $u_{\eps,i} \to u_i$ in $L^1(\Omega_\delta)$.

Observe now that if $E_k$ is contained in $\lambda_i\cap\lambda_j$ then by construction
$$e_{\eps}^i(u_{\eps,i}) = e_{\eps}^j(u_{\eps,j}) = \eps |D\phi_\eps|^2 + \frac{1}{\eps} W(\phi_\eps)$$
on $\tilde R^k_\eps = (R^k_\eps \cap \Omega_\delta) \setminus B_\eps$.
Let $\phi = (\phi_1,\dots,\phi_{N-1})$, with $\phi_i\geq 0$ and $\Psi^*(\phi) \leq 1$, we deduce
\[
\begin{aligned}
&\int_{\Omega_\delta} \sum_i \phi_i e_{\eps}^i(u_{\eps,i}) \,dx \leq \sum_{k=1}^\ell \int_{\tilde R^k_\eps} \sum_i \phi_i e_{\eps}^i(u_{\eps,i}) \,dx + \int_{B_\eps \cap \Omega_\delta} \sum_i \phi_i e_{\eps}^i(u_{\eps,i}) \,dx \\
&\leq \sum_{k=1}^\ell \int_{\tilde R^k_\eps} \sum_i \phi_i g_i^k \left( \eps |D\phi_\eps|^2 + \frac{1}{\eps} W(\phi_\eps) \right) \,dx + \int_{B_\eps \cap \Omega_\delta} \Psi(\vec e_\eps(U_\eps)) \,dx \\
&\leq \sum_{k=1}^\ell \int_{\tilde R^k_\eps} \Psi(g^k) \left( \eps |D\phi_\eps|^2 + \frac{1}{\eps} W(\phi_\eps) \right) \,dx + C\eps^{1/3} \\
&\leq \sum_{k=1}^\ell \Psi(g^k) (c_0\mathcal{H}^1(E_k \cap \Omega_\delta) + o(1)) + C\eps^{1/3} \leq c_0 |DU^\bot - \Gamma|_\Psi(\Omega_\delta) + o(1)\\
\end{aligned}
\]
as $\eps \to 0$. In view of \eqref{eq:fpsiepsdef} we have
\[
F^\Psi_\eps(U_\eps,\Omega_\delta) \leq c_0 |DU^\bot - \Gamma|_\Psi(\Omega_\delta) + o(1),
\]
and conclusion \eqref{eq:gammalimsup} follows.

\medskip

\emph{Step 2.} Let us consider now the case $\Lambda_L \equiv \Lambda = \tau \otimes g \cdot \mathcal{H}^1\res L$, $L = \cup_i \lambda_i$ and the $\lambda_i$ are not necessarily polyhedral. Let $U \in SBV(\Omega_\delta; \mathbb{Z}^{N-1})$ such that $DU^\bot = \Gamma - \Lambda_L$. We rely on Lemma \ref{lemma:polyapprox} below to secure a sequence of acyclic polyhedral graphs $L_n = \cup_i \lambda_{i}^n$, $\lambda_i^n$ transverse to $\gamma_i$, and s.t. the Hausdorff distance $d_H(\lambda_i^n,\lambda_i) < \frac1n$ for all $i = 1,\dots,N-1$, and $|\Lambda_{L_n}|_\Psi(\Omega_\delta) \leq |\Lambda_L|_\Psi(\Omega_\delta) + \frac1n$. Let $U^n \in SBV(\Omega_\delta; \mathbb{Z}^{N-1})$ such that $(DU^n)^\bot = \Gamma - \Lambda_{L_n}$. In particular $U^n \to U$ in $[L^1(\Omega_\delta)]^{N-1}$ and by step 1 we may construct a sequence $U^n_\eps$ s.t. $U^n_\eps \to U^n$ in $[L^1(\Omega_\delta)]^{N-1}$ and
\[
\begin{aligned}
\limsup_{\eps\to 0}  F_\eps^\Psi(U_\eps^n, \Omega_\delta) &\leq c_0 |(DU^n)^\bot-\Gamma|_\Psi(\Omega_\delta) = c_0 |\Lambda_{L_n}|_\Psi(\Omega_\delta) \\ &\leq c_0|\Lambda_L|_\Psi(\Omega_\delta) + \frac{c_0}{n} = c_0|DU^\bot-\Gamma|_\Psi(\Omega_\delta) + \frac{c_0}{n}.
\end{aligned}
\]
We deduce
\[
\limsup_{n\to \infty}  F_{\eps_n}^\Psi(U_{\eps_n}^n, \Omega_\delta) \leq c_0|DU^\bot-\Gamma|_\Psi(\Omega_\delta)
\]
for a subsequence $\eps_n \to 0$ as $n \to +\infty$. Conclusion \eqref{eq:gammalimsup} follows.

\end{proof}

\begin{lemma}\label{lemma:polyapprox}
Let $L \in \mathcal{G}(A)$,  $L = \cup_{i=1}^{N-1} \lambda_i$, be an acyclic graph connecting $P_1,\dots,P_N$. Then for any $\eta > 0$ there exists $L' \in \mathcal{G}(A)$, $L' = \cup_{i=1}^{N-1} \lambda_i'$, with $\lambda'_i$ a simple polyhedral curve of finite length connecting $P_i$ to $P_N$ and transverse to $\gamma_i$, such that the Hausdorff distance $d_H(\lambda_i,\lambda_i') < \eta$ and $|\Lambda_{L'}|_\Psi(\R^2) \leq |\Lambda_L|_\Psi(\R^2) + \eta$, where $\Lambda_{L}$ and $\Lambda_{L'}$ are the canonical tensor valued representations of $L$ and $L'$.
\end{lemma}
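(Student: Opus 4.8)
The plan is to exploit the fact that an acyclic graph $L\in\mathcal{G}(A)$, being a topological tree whose oriented arcs $\lambda_i$ all flow coherently toward the common endpoint $P_N$, carries a \emph{finite laminar} branching structure, and then to approximate each of its edges by inscribed polygons while keeping the branch points fixed, so that the overlaps — and hence the density $g$ — are preserved. First I would record this combinatorial structure. Since every $\lambda_i$ is a simple arc ending at $P_N$ and the orientations agree on overlaps, acyclicity forces the active index sets $I(x)=\{i:g_i(x)=1\}$ to be monotone non-decreasing as $x$ moves toward $P_N$ along the tree: once two arcs coincide on a set of positive length they cannot separate again without producing a cycle. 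Consequently $L$ has at most $N-2$ branch points $b_1,\dots,b_m$, and removing them splits $L$ into finitely many rectifiable arcs $e_1,\dots,e_M$ (the topological edges), on each of which $g$ is constant, say $g\equiv g^k$ on $e_k$. By the rank-one identity $|\Lambda_L|_\Psi=\Psi(g)\,|\Lambda_L|_1$ this yields
\[
|\Lambda_L|_\Psi(\R^2)=\sum_{k=1}^M \Psi(g^k)\,\mathcal{H}^1(e_k).
\]

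For each edge $e_k$, joining two nodes among $\{b_1,\dots,b_m,P_1,\dots,P_N\}$, I would take an inscribed polygonal arc $e_k'$ with the same endpoints, obtained by sampling $e_k$ finely enough that $d_H(e_k,e_k')<\eta$ and $\mathcal{H}^1(e_k')\le\mathcal{H}^1(e_k)$; inscribed polygons never increase length and converge to the arc both in length and in Hausdorff distance. Reassembling $L'=\cup_k e_k'$ with the \emph{same} incidence structure at the nodes, and letting $\lambda_i'$ be the concatenation of those $e_k'$ with $g^k_i=1$, produces simple polyhedral curves from $P_i$ to $P_N$ whose pairwise overlaps are exactly the edges shared by $\lambda_i$ and $\lambda_j$. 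Hence the canonical density of $L'$ equals $g^k$ on $e_k'$, and
\[
|\Lambda_{L'}|_\Psi(\R^2)=\sum_{k=1}^M \Psi(g^k)\,\mathcal{H}^1(e_k')\le |\Lambda_L|_\Psi(\R^2),
\]
while $d_H(\lambda_i,\lambda_i')<\eta$ since the approximation is $\eta$-close on each edge and the endpoints are matched.

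To finish, a generic small perturbation of the finite vertex set (moving each branch point once, coherently for all incident edges) makes every $\lambda_i'$ simple, keeps distinct $\lambda_i'$ disjoint away from shared edges, and makes each $\lambda_i'$ transverse to the corresponding segment $\gamma_i$, transversality to a fixed finite family of segments being a generic condition for polyhedral curves; chosen small enough, this perturbation preserves $d_H(\lambda_i,\lambda_i')<\eta$ and $|\Lambda_{L'}|_\Psi\le|\Lambda_L|_\Psi+\eta$. The delicate point is the first step: rigorously extracting the finite, laminar branching structure of a merely rectifiable acyclic graph and verifying that $g$ is constant up to $\mathcal{H}^1$-null sets on each of the finitely many edges. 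This is exactly where acyclicity together with the coherent orientation $\tau_i=\tau_j$ on $\lambda_i\cap\lambda_j$ is essential, ruling out arcs that merge and split repeatedly; isolated transversal crossings carry no $\mathcal{H}^1$-mass and are absorbed by the perturbation above. Everything else reduces to a routine inscribed-polygon estimate.
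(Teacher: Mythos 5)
Your overall strategy matches the paper's: decompose $L$ into finitely many arcs $e_1,\dots,e_M$ on which the multiplicity vector $g$ is constant, replace each arc by a polygonal curve with the same endpoints while controlling Hausdorff distance and length, reassemble with the same incidence structure, and compute the $\Psi$-mass as the weighted sum $\sum_k \Psi(g^k)\,\mathcal{H}^1(\cdot)$ over the edges. Your justification of the finite edge decomposition (acyclicity plus coherent orientation forces merged arcs to stay merged, hence a laminar structure with finitely many branch points) is actually more detailed than the paper, which takes such a decomposition ``by construction''.

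The genuine gap is in your last step. A ``generic small perturbation of the finite vertex set'' cannot deliver what you claim: moving only the branch points does not affect intersections occurring in the interiors of the polygonal edges, and, in the plane, a transversal crossing between two polygonal arcs (or a transversal self-crossing of one arc) is stable under arbitrarily small perturbations, so such crossings cannot be removed or ``absorbed'' by genericity. Nor are isolated crossings harmless just because they are $\mathcal{H}^1$-null: if two distinct polygonal edges $e_k'$ and $e_l'$ cross at an interior point, the union $L'$ contains a cycle, so $L' \notin \mathcal{G}(A)$, the curves $\lambda_i'$ need not be simple, and the identity $|\Lambda_{L'}|_\Psi(\R^2) = \sum_k \Psi(g^k)\mathcal{H}^1(e_k')$ — which presupposes that the canonical multiplicity of $L'$ is again $g^k$ on $e_k'$ — is no longer justified. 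What is needed, and what the paper builds into its approximations $\tilde\zeta_m$, is that distinct polygonal edges meet only at common endpoints. Away from the branch points this is automatic for sufficiently fine approximations, since distinct edges of $L$ are at positive distance from each other there; near a common branch point one must argue separately (keeping or rerouting the curves on a small initial arc), and this is precisely where allowing a small length increase $\mathcal{H}^1(e_k') \leq \mathcal{H}^1(e_k) + \eta/(CM)$, as in the paper, rather than insisting on inscribed length-nonincreasing polygons, provides the necessary room; the transversality to the $\gamma_i$ should likewise be arranged in this construction rather than by an a posteriori perturbation.
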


\begin{proof}
Since $L \in \mathcal{G}(A)$, we can write $L = \cup_{m=1}^M \zeta_m$, with $\zeta_m$ simple Lipschitz curves such that, for $m_i \neq m_j$, $\zeta_{m_i} \cap \zeta_{m_j}$ is either empty or reduces to one common endpoint. Let $\Lambda_L = \tau\otimes g \cdot \mathcal H^1\res L$ be the rank one tensor valued measure canonically representing $L$, and let $d_m = \Psi(g(x))$ for $x \in \zeta_m$. The $d_m$ are constants because by construction $g$ is constant over each $\zeta_m$. Consider now a polyhedral approximation $\tilde \zeta_m$ of $\zeta_m$ having its same endpoints, with $d_H(\tilde{\zeta}_m, \zeta_m) \leq \eta$, $\mathcal H^1(\tilde\zeta_m) \leq \mathcal H^1(\zeta_m) + \eta/(CM)$ ($C$ to be fixed later) and, for $m_i \neq m_j$, $\tilde\zeta_{m_i} \cap \tilde\zeta_{m_j}$ is either empty or reduces to one common endpoint. Observe that whenever $\zeta_m$ intersects some $\gamma_i$, such a $\tilde{\zeta}_m$ can be constructed in order to intersect $\gamma_i$ transversally in a finite number of points. Define $L' = \cup_{m=1}^M \tilde\zeta_m$ and let $\Lambda_{L'} = \tau' \otimes g'\cdot \mathcal{H}\res L'$ be its canonical tensor valued measure representation. Then, by construction $\Psi(g'(x)) = d_m$ for any $x \in \tilde{\zeta}_m$, hence
\[
|\Lambda_{L'}|_\Psi(\R^2) = \sum_{m=1}^M d_m \mathcal{H}^1(\tilde \zeta_{m}) \leq \sum_{m=1}^M d_m \left(\mathcal H^1(\zeta_m) + \frac{\eta}{CM}\right) \leq |\Lambda_L|_\Psi(\R^2) + \eta,
\]
provided $C = \max\{ \Psi(g) \,:\, g\in\R^{N-1}, g_i \in \{0,1\} \text{ for all } i = 1,\dots,N-1\}$. Remark finally that $d_H(L,L') < \eta$ by construction.
\end{proof}

Thanks to the previous propositions we are now able to prove the following

\begin{proposition}[Convergence of minimizers]\label{cor:minconpsi} Let $\{U_\eps\}_\eps\subset H$ be a sequence of minimizers for $F_\eps^\Psi$ in $H$. Then (up to a subsequence) $U_\eps\to U$ in $[L^1(\Omega_\delta)]^{N-1}$, and $U\in SBV(\Omega_\delta;\mathbb{Z}^{N-1})$ is a minimizer of $F^\Psi(U,\Omega_\delta) = c_0|DU^\bot - \Gamma|_\Psi(\Omega_\delta)$ in $SBV(\Omega_\delta;\mathbb{Z}^{N-1})$.
\end{proposition}

\begin{proof}
Let $V\in SBV(\Omega_\delta;\mathbb{Z}^{N-1})$ such that $DV^\bot = \Gamma-\Lambda$, where $\Lambda$ canonically represents an acyclic graph $L \in \mathcal G(A)$, and let $V_\eps\in H$ such that $\limsup_{\eps\to 0} F_\eps^\Psi(V_\eps,\Omega_\delta )\le F^\Psi (V,\Omega_\delta )$. Since $F_\eps^\Psi(U_\eps,\Omega_\delta )\le F_\eps^\Psi(V_\eps,\Omega_\delta )$, by Proposition \ref{prop:comp} there exists $U \in SBV(\Omega_\delta;\mathbb{Z}^{N-1})$ s.t. $U_\eps \to U$ in $[L^1(\Omega_\delta)]^{N-1}$ and by Proposition \ref{prop:liminf} we have
$$
F^\Psi(U,\Omega_\delta )\le \liminf_{\eps\to 0} F_\eps^\Psi(U_\eps,\Omega_\delta )\le\limsup_{\eps\to 0} F_\eps^\Psi(V_\eps,\Omega_\delta )\le F^\Psi (V,\Omega_\delta )\,.
$$
Given a general $V \in SBV(\Omega_\delta;\mathbb{Z}^{N-1})$ we can proceed like in Remark \ref{rem:minstructureFpsi} and find $V'$ such that $DV'^\bot = \Gamma-\Lambda_{L'}$ with $L'$ acyclic, and $ F^\Psi(V',\Omega_\delta) \leq  F^\Psi(V,\Omega_\delta)$. The conclusion follows.

\end{proof}

Let us focus on the case $\Psi = \Psi_\alpha$, where $\Psi_\alpha(g) = |g|_{1/\alpha}$ for $0<\alpha\leq 1$ and $\Psi_0(g) = |g|_\infty$, and denote $F^0_\eps \equiv F^{\Psi_0}_\eps$ and $F^\alpha_\eps \equiv F^{\Psi_\alpha}_\eps$. For $U=(u_1,\dots,u_{N-1}) \in H$ we have
\begin{equation}\label{eq:Fepsalpha}
{ F}_\eps^0(U,\Omega_\delta)=\int_{\Omega_\delta} \, \sup_{i} e_\eps^i(u_i) \, dx, \quad\quad F_\eps^\alpha(U,\Omega_\delta)=\int_{\Omega_\delta} \, \left(\sum_{i=1}^{N-1} e_\eps^i(u_i)^{1/\alpha}\right)^\alpha\, dx,
\end{equation}
and
\begin{equation}\label{eq:FalphaU}
{F}^0(U,\Omega_\delta):=c_0|DU^\bot-\Gamma|_{\Psi_0}(\Omega_\delta)\, \quad\text{and}\quad {F}^\alpha(U,\Omega_\delta):=c_0|DU^\bot-\Gamma|_{\Psi_\alpha}(\Omega_\delta),
\end{equation}
which are the localized versions of \eqref{eq:F0full} and \eqref{eq:Fafull}.

\begin{theorem}\label{thm:main}
Let $\{P_1,\dots,P_N\} \subset \R^2$ such that $\max_i |P_i|=1$, $0 < \delta \ll \max_{ij}|P_i-P_j|$, $\Omega = B_{10}(0)$ and $\Omega_\delta = \Omega \setminus (\cup_i B_\delta(P_i))$. For $0 \leq \alpha \leq 1$ and $0 < \eps \ll \delta$, denote $F^{\alpha,\delta}_\eps \equiv { F}_\eps^\alpha(\cdot,\Omega_\delta)$ and $F^{\alpha,\delta} \equiv F^\alpha(\cdot,\Omega_\delta)$, with $F^\alpha_\eps(\cdot,\Omega_\delta)$ (resp. $F^\alpha(\cdot,\Omega_\delta)$) defined in \eqref{eq:Fepsalpha} (resp. \eqref{eq:FalphaU}).
\begin{itemize}
\item[(i)] Let $\{ U_\eps^{\alpha,\delta} \}_\eps$ be a sequence of minimizers for $F_\eps^{\alpha,\delta}$ on $H$, with $H$ defined in \eqref{eq:H}. Then, up to subsequences, $U^{\alpha,\delta}_\eps \to U^{\alpha,\delta}$ in $[L^1(\Omega_\delta)]^{N-1}$ as $\eps \to 0$, with $U^{\alpha,\delta} \in SBV(\Omega_\delta;\mathbb{Z}^{N-1})$ a minimizer of $F^{\alpha,\delta}$ on $SBV(\Omega_\delta; \mathbb{Z}^{N-1})$. Furthermore, $F_\eps^{\alpha,\delta}(U_\eps^{\alpha,\delta}) \to F^{\alpha,\delta}(U^{\alpha,\delta})$.

\item[(ii)]Let $\{ U^{\alpha,\delta} \}_\delta$ be a sequence of minimizers for $F^{\alpha,\delta}$ on $SBV(\Omega_\delta; \mathbb{Z}^{N-1})$. Up to subsequences we have $U^{\alpha,\delta} \to U^{\alpha}|_{\Omega_{\eta}}$ in $[L^1(\Omega_{\eta})]^{N-1}$ as $\delta \to 0$ for every fixed $\eta$ sufficiently small, with $U^{\alpha} \in SBV(\Omega;\mathbb{Z}^{N-1})$ a minimizer of $F^{\alpha}$ on $SBV(\Omega; \mathbb{Z}^{N-1})$, and $F^\alpha$ defined in \eqref{eq:F0full}, \eqref{eq:Fafull}. Furthermore, $F^{\alpha,\delta}(U^{\alpha,\delta}) \to F^{\alpha}(U^{\alpha})$.
\end{itemize}
\end{theorem}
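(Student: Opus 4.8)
For part (i), I would observe that it is essentially a direct consequence of Proposition \ref{cor:minconpsi} specialized to $\Psi = \Psi_\alpha$. Compactness (Proposition \ref{prop:comp}) produces a subsequence with $U_\eps^{\alpha,\delta} \to U^{\alpha,\delta}$ in $[L^1(\Omega_\delta)]^{N-1}$ and $U^{\alpha,\delta}\in SBV(\Omega_\delta;\mathbb{Z}^{N-1})$, while Proposition \ref{cor:minconpsi} identifies the limit as a minimizer of $F^{\alpha,\delta}=c_0|D\,\cdot\,^\bot-\Gamma|_{\Psi_\alpha}(\Omega_\delta)$. For the convergence of energies, the liminf inequality (Proposition \ref{prop:liminf}) gives $F^{\alpha,\delta}(U^{\alpha,\delta})\le\liminf_\eps F_\eps^{\alpha,\delta}(U_\eps^{\alpha,\delta})$; conversely, since by Remark \ref{rem:minstructureFpsi} the limit $U^{\alpha,\delta}$ is the canonical function of an acyclic graph, Proposition \ref{prop:limsup} supplies a recovery sequence $\tilde U_\eps\to U^{\alpha,\delta}$ with $\limsup_\eps F_\eps^{\alpha,\delta}(\tilde U_\eps)\le F^{\alpha,\delta}(U^{\alpha,\delta})$, and minimality of $U_\eps^{\alpha,\delta}$ forces $\limsup_\eps F_\eps^{\alpha,\delta}(U_\eps^{\alpha,\delta})\le\limsup_\eps F_\eps^{\alpha,\delta}(\tilde U_\eps)\le F^{\alpha,\delta}(U^{\alpha,\delta})$. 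Combining the two bounds yields the stated energy convergence.

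For part (ii) the plan is a $\delta\to0$ diagonal and lower-semicontinuity argument. First I would establish a uniform energy bound by testing against the competitor $U\equiv0$, which gives $F^{\alpha,\delta}(U^{\alpha,\delta})\le c_0|\Gamma|_{\Psi_\alpha}(\Omega_\delta)\le c_0|\Gamma|_{\Psi_\alpha}(\Omega)<\infty$, uniformly in $\delta$. As in Proposition \ref{prop:minF0} and Remark \ref{rem:minstructureFpsi} I may also assume $\textup{spt}\,U^{\alpha,\delta}\subset B_8\subset\subset\Omega$. Fix a small $\eta$: for every $\delta<\eta$ one has $\Omega_\eta\subset\Omega_\delta$, and since all norms on $\R^{N-1}$ are equivalent the $\Psi_\alpha$-mass bound controls the total variation, giving $|DU^{\alpha,\delta}|(\Omega_\eta)\le |D(U^{\alpha,\delta})^\bot-\Gamma|(\Omega_\eta)+|\Gamma|(\Omega_\eta)\le C$. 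Thus the restrictions $U^{\alpha,\delta}|_{\Omega_\eta}$ are equibounded in $BV(\Omega_\eta)$, and by $BV$-compactness together with a diagonal extraction over a sequence $\eta_k\downarrow0$ I obtain a (not relabeled) subsequence converging in $L^1(\Omega_\eta)$ for every $\eta$ to some $U^\alpha$. Because the $P_i$ are points, hence $\mathcal H^1$-negligible and carrying no mass of the atomless planar $BV$-gradient measures, and $|DU^\alpha|(\Omega_\eta)\le C$ uniformly, $U^\alpha$ extends to an element of $SBV(\Omega;\mathbb{Z}^{N-1})$.

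Next I would match the two inequalities. For the lower bound, on each $\Omega_\eta$ the $L^1$-convergence and lower semicontinuity of the $\Psi_\alpha$-mass give
\[
c_0|D(U^\alpha)^\bot-\Gamma|_{\Psi_\alpha}(\Omega_\eta)\le\liminf_{\delta\to0}c_0|D(U^{\alpha,\delta})^\bot-\Gamma|_{\Psi_\alpha}(\Omega_\eta)\le\liminf_{\delta\to0}F^{\alpha,\delta}(U^{\alpha,\delta}),
\]
and letting $\eta\to0$, using that $|D(U^\alpha)^\bot-\Gamma|_{\Psi_\alpha}$ does not charge the finitely many points $P_i$, yields $F^\alpha(U^\alpha)\le\liminf_\delta F^{\alpha,\delta}(U^{\alpha,\delta})$, with $F^\alpha(\,\cdot\,)=c_0|D\,\cdot\,^\bot-\Gamma|_{\Psi_\alpha}(\Omega)$. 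For minimality, any competitor $V\in SBV(\Omega;\mathbb{Z}^{N-1})$ (taken supported in $B_8$ by the truncation of Proposition \ref{prop:minF0}) has admissible restriction $V|_{\Omega_\delta}$, so
\[
F^{\alpha,\delta}(U^{\alpha,\delta})\le F^{\alpha,\delta}(V|_{\Omega_\delta})=c_0|DV^\bot-\Gamma|_{\Psi_\alpha}(\Omega_\delta)\le F^\alpha(V),
\]
whence $\liminf_\delta F^{\alpha,\delta}(U^{\alpha,\delta})\le F^\alpha(V)$ and therefore $F^\alpha(U^\alpha)\le F^\alpha(V)$ for every $V$, i.e. $U^\alpha$ minimizes $F^\alpha$ on $\Omega$. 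Choosing $V=U^\alpha$ in the last chain gives $\limsup_\delta F^{\alpha,\delta}(U^{\alpha,\delta})\le F^\alpha(U^\alpha)$, which together with the lower bound proves $F^{\alpha,\delta}(U^{\alpha,\delta})\to F^\alpha(U^\alpha)$.

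The main obstacle is the behavior at the terminal points $P_i$, excised in $\Omega_\delta$ but present in $\Omega$. Two things must be controlled there: that no $BV$-mass of $U^{\alpha,\delta}$ escapes to or concentrates at the $P_i$ as $\delta\to0$, so that the $L^1_{\mathrm{loc}}$-limit is a genuine element of $SBV(\Omega)$ rather than merely a function defined away from the points; and that the limiting energy on $\Omega$ is recovered from the energies on the shrinking annular domains, i.e. $|D(U^\alpha)^\bot-\Gamma|_{\Psi_\alpha}(\Omega_\eta)\uparrow|D(U^\alpha)^\bot-\Gamma|_{\Psi_\alpha}(\Omega)$ with no residual mass at $\eta=0$. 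Both rest on the fact that in the plane the relevant measures are atomless and the $P_i$ are $\mathcal H^1$-negligible, yielding the removable-singularity and no-concentration properties; making these rigorous, and verifying the uniform bound needed for compactness (support in $B_8$ plus the gradient bound via Poincaré), is the technical heart of part (ii).
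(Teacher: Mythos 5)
Your proposal is correct and follows essentially the same route as the paper: part (i) is, as in the paper, a direct application of Proposition \ref{cor:minconpsi} with the energy convergence extracted from the same liminf/limsup sandwich (the paper compares against a recovery sequence for a graph-type minimizer rather than claiming the limit itself is graph-type, but this is only a cosmetic difference), and part (ii) is the paper's own diagonal argument — uniform $BV$-bound on $\Omega_\eta$, $L^1$-compactness, lower semicontinuity of $F^{\alpha,\eta}$, monotonicity $F^{\alpha,\eta}\le F^{\alpha,\delta}$ for $\delta<\eta$, and comparison with a competitor on $\Omega$. You merely spell out details the paper leaves implicit (the uniform energy bound via $U\equiv 0$, the support normalization needed for the Poincar\'e/$L^1$ bound, removability of the points $P_i$ and non-concentration of the limit measure there), and you test against an arbitrary $V\in SBV(\Omega;\mathbb{Z}^{N-1})$ instead of a fixed minimizer $\bar U^\alpha$, which is equivalent.
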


\begin{proof}
In view of Proposition \ref{cor:minconpsi} it remains to prove item $(ii)$. The sequence $\{ U^{\alpha,\delta} \}_\delta$ is equibounded in $BV(\Omega_\eta)$ uniformly in $\eta$, hence $U^{\alpha,\delta} \to U$ in $[L^1(\Omega_\eta)]^{N-1}$ for all $\eta>0$ sufficiently small, with $U^\alpha \in SBV(\Omega; \mathbb{Z}^{N-1})$ and $F^{\alpha,\eta}(U^\alpha) \leq \liminf_{\delta \to 0} F^{\alpha,\eta}(U^{\alpha,\delta})$	by lower semicontinuity of $F^{\alpha,\eta}$. On the other hand, let $\bar U^\alpha$ be a minimizer of $F^\alpha$ on $SBV(\Omega;\mathbb{Z}^{N-1})$. We have $F^{\alpha,\eta}(U^{\alpha,\delta}) \leq F^{\alpha,\delta}(U^{\alpha,\delta})$ for any $\delta < \eta$, and by minimality, $F^{\alpha,\delta}(U^{\alpha,\delta}) \leq F^{\alpha,\delta}(\bar U^{\alpha}) \leq F^{\alpha}(\bar U^{\alpha}) \leq F^\alpha(U^\alpha)$. This proves $(ii)$.
\end{proof}

%
%

\section{Convex relaxation\label{secconv}}

In this section we propose convex positively $1$-homogeneous relaxations of the irrigation-type functionals $\mathcal F^\alpha$ for $0\le \alpha <1$ so as to include the Steiner tree problem corresponding to $\alpha=0$ (notice that the case 
$\alpha=1$ corresponds to the well-known Monge-Kantorovich optimal transportation problem with respect to the Monge cost $c(x,y)=|x-y|$).  

More precisely, we consider relaxations of the functional defined by
\[
\mathcal F^\alpha(\Lambda)=\|\Lambda\|_{\Psi_\alpha} = \int_{\R^d} |g|_{1/\alpha} \, d\mathcal{H}^1\res L
\]
if $\Lambda$ is the canonical representation of an acyclic graph $L$ with terminal points $\{P_1,\dots,P_N\}\subset\R^d$, so that in particular, according to Definition \ref{def:tensor}, we can write $\Lambda = \tau \otimes g \cdot \mathcal{H}^1\res L$ with $|\tau|=1$, $g_i \in \{0,1\}$. For any other ${d\times(N-1)}$-matrix valued measure $\Lambda$ on $\R^d$ we set $\mathcal F^\alpha(\Lambda)=+\infty$.

As a preliminary remark observe that, since we are looking for positively $1$-homogeneous extensions, any candidate extension $\mathcal R^\alpha$ satisfies
\[
\mathcal R^\alpha (c\Lambda) = |c|\mathcal F^\alpha(\Lambda)
\]
for any $c \in \R$ and $\Lambda$ of the form $\tau \otimes g \cdot \mathcal{H}^1\res L$ as above. As a consequence we have that $\mathcal R^\alpha(-\Lambda)=\mathcal R^\alpha(\Lambda)$, where $-\Lambda$ represents the same graph $L$ as $\Lambda$ but only with reversed orientation.

\subsection{Extension to rank one tensor measures}\label{sec:rank1}

First of all let us discuss the possible positively $1$-homogeneous convex relaxations of $\mathcal F^\alpha$ on the class of rank one tensor valued Radon measures $\Lambda = \tau\otimes g\cdot|\Lambda|_1$, where $|\tau|=1$, $g\in\R^{N-1}$ (cf. Section \ref{sec:intro}).
For a generic rank one tensor valued measure $\Lambda = \tau\otimes g\cdot|\Lambda|_1$ we can consider extensions of the form
\[
\mathcal R^\alpha(\Lambda) = \int_{\R^d} \Psi^\alpha(g) \,d|\Lambda|_1
\]
for a convex positively 1-homogeneous $\Psi^\alpha$ on $\R^{N-1}$ (i.e. a norm) verifying
\begin{equation}\label{eq:extensionpsi}
\begin{aligned}
& \Psi^\alpha(g) = |g|_{1/\alpha}\, &\qquad&\text{if }\, g_i \in \{0,1\} \text{ for all } i = 1,\dots,N-1, \\
&\Psi^\alpha(g)\geq|g|_{1/\alpha}\, &\qquad&\text{for all }\, g\in\R^{N-1}.
\end{aligned}
\end{equation}
One possible choice is represented by $\Psi^\alpha(g) = |g|_{1/\alpha}$ for all $g \in \R^{N-1}$, while sharper relaxations are given by, for $\alpha>0$,
\begin{equation}\label{eq:amnormalpha}
\Psi^{\alpha}_*(g)=\left(\sum_{1\le i\le N-1}|g_i ^+|^{1/\alpha}\right)^\alpha\,\, + \left(\sum_{1\le i\le N-1} |g_i^-|^{1/\alpha}\right)^\alpha,
\end{equation}
and for $\alpha = 0$ by
\begin{equation}\label{eq:amnorm}
\Psi^{0}_*(g)=\sup_{1\le i\le N-1}g_i ^+\,\, - \inf_{1\le i\le N-1} g_i^-,
\end{equation}
with $g_i^{+}=\max\{g_i,0\}$ and $g_i^-=\min\{g_i,0\}$. In particular $\Psi^{\alpha}_*$ represents the maximal choice within the class of extensions $\Psi^\alpha$ satisfying
\[
\Psi^\alpha(g) = |g|_{1/\alpha} \quad \text{if } g_i \geq 0 \text{ for all } i = 1,\dots,N-1.
\]
Indeed, for $\alpha>0$, $g \in \R^{N-1}$ and $g^\pm = (g^\pm_1,\dots,g^\pm_{N-1})$, we have
\[
\begin{aligned}
\Psi^\alpha(g) &\leq \Psi^\alpha(g^+ + g^-) = 2\Psi^\alpha\left(\frac12 g^+ + \frac12 g^-\right) \leq 2\left( \frac12 \Psi^\alpha(g^+) + \frac12 \Psi^\alpha(g^-) \right) \\
&= \Psi^\alpha(g^+) + \Psi^\alpha(g^-) = |g^+|_{1/\alpha} + |g^-|_{1/\alpha} = \Psi^{\alpha}_*(g).
\end{aligned}
\]
The interest in optimal extensions $\Psi^\alpha$ on rank one tensor valued measures relies in the so called calibration method as a minimality criterion for $\Psi^\alpha$-mass functionals, as it is done in particular in \cite{MaMa} for (STP) using the (optimal) norm $\Psi^0_*$.

According to the convex extensions $\Psi^\alpha$ and $\Psi^0$ considered, when it comes to finding minimizers of respectively $\mathcal R^\alpha$ and $\mathcal R^0$ in suitable classes of weighted graphs with prescribed fluxes at their terminal points, or more generally in the class of rank one tensor valued measures having divergence prescribed by \eqref{eq:solenoidal}, the minimizer is not necessarily the canonical representation of an acyclic graph. Let us consider the following example, where the minimizer contains a cycle.

\medskip

\begin{example}\label{ex:triangle} {\rm  Consider the Steiner tree problem for $\{P_1,P_2,P_3\}\subset\R^2$. We claim that a minimizer of $\mathcal R^0(\Lambda) = \int_{\R^2} |g|_\infty \, d|\Lambda|_1$ within the class of rank one tensor valued Radon measures $\Lambda = \tau\otimes g\cdot|\Lambda|_1$ satisfying \eqref{eq:solenoidal} is supported on the triangle $L=[P_1,P_2]\cup[P_2,P_3]\cup[P_1,P_3]$, hence its support is not acyclic and such a minimizer is not related to any optimal Steiner tree. Denoting $\tau$ the global orientation of $L$ (i.e. from $P_1$ to $P_2$, $P_1$ to $P_3$ and $P_2$ to $P_3$) we actually have as minimizer
\begin{equation}\label{eq:lambdacalibrated}
\Lambda=\tau\otimes\left(\left[\frac12,-\frac12\right]\cdot\mathcal H^1\res[P_1,P_2]+\left[\frac12, \frac12\right]\cdot\mathcal H^1\res[P_3,P_2]+\left[\frac12, \frac12\right]\cdot\mathcal H^1\res[P_3,P_1]\right).
\end{equation}
The proof of the claim follows from Remark \ref{rem:calibration} and Lemma \ref{lem:calibration}.
}
\end{example}

\begin{remark}[Calibrations]\label{rem:calibration}{\rm A way to prove the minimality of $\Lambda=\tau\otimes g \cdot \mathcal H^1\res L$ within the class of rank one tensor valued Radon measures satisfying \eqref{eq:solenoidal} is to exhibit a {\em calibration} for $\Lambda$, i.e. a matrix valued differential form $\omega = (\omega_1, \dots, \omega_{N-1})$, with $\omega_j = \sum_{i=1}^d \omega_{ij}dx_i$ for measurable coefficients $\omega_{ij}$, such that
\begin{itemize}
\item $d\omega_j = 0$ for all $j = 1,\dots,N-1$;
\item $\|\omega\|_*\le 1$, where $\|\cdot\|_*$ is the dual norm to $\|\tau\otimes g\|=|\tau |\cdot |g|_\infty$, defined as
\[
\|\omega\|_* = \sup \{ \tau^t \, \omega \, g \,\,:\,\, |\tau|=1,\; |g|_\infty \leq 1\};
\]
\item $\langle\omega,\Lambda\rangle=\sum_{i,j} \tau_i\omega_{ij}g_j =|g|_\infty$ pointwise, so that 
\[
\int_{\R^2}\langle\omega,\Lambda\rangle = \mathcal R^0(\Lambda).
\]
\end{itemize}
In this way for any competitor $\Sigma=\tau'\otimes g'\cdot|\Sigma|_1$ we have $\langle \omega,\Sigma\rangle \le |g'|_\infty$, and moreover $\Sigma-\Lambda=DU^\perp$, for $U\in BV(\R^2;\mathbb{R}^{N-1})$, hence
$$
\int_{\R^2}\langle\omega,\Lambda-\Sigma\rangle=\int_{\R^2}\langle\omega,DU^\perp \rangle=\int_{\R^2}\langle d\omega,U\rangle=0\, .
$$
It follows
$$
\mathcal R^0(\Sigma)\ge \int_{\R^2} \langle \omega,\Sigma\rangle = \int_{\R^2} \langle \omega,\Lambda\rangle =\mathcal R^0(\Lambda)\, ,
$$
i.e. $\Lambda$ is a minimizer within the given class of competitors.
}
\end{remark}

Let us construct a calibration $\omega=(\omega_1,\omega_2)$ for $\Lambda$ in the general case $P_1\equiv(x_1,0)$, $P_2\equiv(x_2,0)$ and $P_3\equiv(0,x_3)$, with $x_1<0$, $x_1 < x_2$ and $x_3>0$.

\begin{lemma}\label{lem:calibration} Let $P_1$, $P_2$, $P_3$ defined as above and $\Lambda$ as in \eqref{eq:lambdacalibrated}. Consider $\omega=(\omega_1,\omega_2)$ defined as
\[
\begin{aligned}
\omega_1 &= \frac{1}{2a}[(x_1+a) dx + x_3 dy], &\omega_2 = \frac{1}{2a}[(x_1-a) dx + x_3 dy], \quad \text{for }(x,y) \in B_L \\
\omega_1 &= \frac{1}{2b}[(x_2+b) dx + x_3 dy], &\omega_2 = \frac{1}{2b}[(x_2-b) dx + x_3 dy], \quad \text{for }(x,y) \in B_R \\
\end{aligned}
\]
with $B_L$ the left half-plane w.r.t. the line containing the bisector of vertex $P_3$, $B_R$ the corresponding right half-plane and $a = \sqrt{x_1^2+x_3^2}$, $b = \sqrt{x_2^2+x_3^2}$. The matrix valued differential form $\omega$ is a calibration for $\Lambda$.
\end{lemma}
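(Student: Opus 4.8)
The plan is to verify directly the three defining properties of a calibration collected in Remark \ref{rem:calibration}: (i) each $\omega_j$ is closed, $d\omega_j=0$; (ii) the dual-norm bound $\|\omega\|_*\le 1$; and (iii) the pointwise identity $\langle\omega,\Lambda\rangle=|g|_\infty$ along $L$. Once these hold, minimality of $\Lambda$ within the class of rank one measures satisfying \eqref{eq:solenoidal} follows from the argument already given in Remark \ref{rem:calibration}, which in turn proves the claim of Example \ref{ex:triangle}.

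For (i), on each half-plane $B_L$ and $B_R$ the coefficients of $\omega_1,\omega_2$ are constant, so $d\omega_j=0$ in the interior of each region; the only possible contribution to $d\omega_j$ is a singular part carried by the separating line through $P_3$, and this vanishes exactly when the tangential component of $\omega_j$ is continuous across that line. The cleanest way I would organize this is to exhibit $\omega_j=d\phi_j$ for explicit affine potentials $\phi_j$ on each half-plane and check that the two affine pieces agree on the dividing line (equivalently, that the jump $\omega_j^L-\omega_j^R$ is orthogonal to it). This is the step where the precise values $a=\sqrt{x_1^2+x_3^2}$, $b=\sqrt{x_2^2+x_3^2}$ and the exact location of the line enter, and I expect it to be the main obstacle: one must ensure the two constant pieces glue with continuous tangential trace, which is the geometric heart of the construction and is what forces the correct choice of the separating line.

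For (ii), by definition $\|\omega\|_*=\sup\{\tau^t\omega g:\ |\tau|=1,\ |g|_\infty\le 1\}=\sup_{|g|_\infty\le 1}|\omega g|$, since the inner supremum over $\tau$ produces the Euclidean norm of the vector $\omega g\in\R^2$. As $g\mapsto|\omega g|$ is convex, its maximum over the cube $\{|g|_\infty\le 1\}$ is attained at a vertex, so it suffices to evaluate $|\omega g|$ at the four sign patterns $g=(\pm 1,\pm 1)$ in each of $B_L,B_R$. A direct computation exploiting the identities $a^2=x_1^2+x_3^2$ and $b^2=x_2^2+x_3^2$ gives $|\omega g|=1$ at each vertex, whence $\|\omega\|_*=1$; this part is routine.

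For (iii), I would compute $\langle\omega,\Lambda\rangle$ edge by edge, using that $[P_1,P_3]\subset B_L$, $[P_2,P_3]\subset B_R$, and $[P_1,P_2]$ is split by the dividing line, together with the orientation $\tau$ and the weight $g\in\{(\tfrac12,\pm\tfrac12)\}$ carried by $\Lambda$ on each edge according to \eqref{eq:lambdacalibrated}. The substance is that on each edge $\omega g$ has length exactly $|g|_\infty=\tfrac12$ and is aligned with the relevant direction of $\Lambda$, so the pairing attains $|g|_\infty$; the delicate point here, secondary only to the gluing in (i), is the sign and orientation bookkeeping needed so that the pairing yields $+\tfrac12$ (rather than $0$ or $-\tfrac12$) on all three edges simultaneously. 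Alternatively, using the potentials of step (i) and the divergence constraint \eqref{eq:solenoidal}, the global pairing reduces to $\sum_j\bigl[\phi_j(P_3)-\phi_j(P_j)\bigr]$, and I would check that this matches $\mathcal R^0(\Lambda)$, which both certifies the identity (iii) and confirms the minimality asserted in Example \ref{ex:triangle}.
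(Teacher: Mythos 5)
Your strategy coincides with the paper's: verify the three properties of Remark \ref{rem:calibration} directly, reducing closedness to continuity of the tangential trace of the piecewise constant forms across the separating line and computing the pairing edge by edge (the paper actually carries this out only in the equilateral case $x_1=-\tfrac12$, $x_2=\tfrac12$, $x_3=\tfrac{\sqrt{3}}{2}$, declaring the general case similar). Your step (ii) is correct and complete as described: identifying $\omega_j$ with constant vector fields $w_j$, on $B_L$ one has $w_1+w_2=(x_1,x_3)/a$ and $w_1-w_2=(1,0)$, both unit vectors (similarly on $B_R$ with $x_2,b$), so the supremum of $|w_1g_1+w_2g_2|$ over $|g|_\infty\le 1$ is attained at the vertices of the square and equals $1$.

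The gap lies precisely in the two steps you defer, (i) and (iii): carried out with the formulas and half-plane labels exactly as stated, they fail, so the expectation that the gluing and the sign bookkeeping ``work out'' is unfounded. With the orientation declared in Example \ref{ex:triangle} (forced by \eqref{eq:solenoidal}), the edge $[P_1,P_3]$ carries $g=(\tfrac12,\tfrac12)$, is oriented by $(-x_1,x_3)/a$, and lies in $B_L$, where $w_1+w_2=(x_1,x_3)/a$ is \emph{not} parallel to that direction; the pairing on this edge equals $\frac{x_3^2-x_1^2}{2a^2}$ rather than $|g|_\infty=\tfrac12$ (it gives $\tfrac14$ already in the equilateral case). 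Likewise the jump of $w_1$ across the separating line is $\tfrac12\left(\tfrac{x_1}{a}-\tfrac{x_2}{b},\,x_3\bigl(\tfrac1a-\tfrac1b\bigr)\right)$, whose component along the bisector direction is $\tfrac12\left[\tfrac{x_1^2-x_3^2}{a^2}-\tfrac{x_2^2-x_3^2}{b^2}\right]$, nonzero unless $|x_1|=|x_2|$, so $d\omega_1\neq 0$ in general. Everything closes once the field on the half-plane containing $[P_i,P_3]$ is taken to be $w_1=\tfrac12(e_{i3}+e_{12})$, $w_2=\tfrac12(e_{i3}-e_{12})$, with $e_{i3}$ the unit vector from $P_i$ to $P_3$ and $e_{12}=(1,0)$, i.e. coefficients $(a-x_1)$ and $-(a+x_1)$ on the side of $P_1$, $(b-x_2)$ and $-(b+x_2)$ on the side of $P_2$: then $w_1\pm w_2$ are exactly the relevant unit edge directions, and the jump $\tfrac12(e_{13}-e_{23})$ is orthogonal to the bisector direction $e_{13}+e_{23}$, so both (i) and (iii) follow (this is what the paper's equilateral computation implicitly uses, with the two half-planes interchanged). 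To complete your proof you must carry out these computations and make this sign/labelling correction explicit; as written, the plan stops short of the two verifications on which the lemma actually hinges.
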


\begin{proof}
For simplicity we consider here the particular case $x_1 = -\frac{1}{2}$, $x_2 = \frac12$ and $x_3 = \frac{\sqrt{3}}{2}$ (the general case is similar). For this choice of $x_1$, $x_2$, $x_3$ we have
\[
\begin{aligned}
\omega_1 &= \frac14 dx+\frac{\sqrt 3}{4} dy, &\omega_2 = -\frac34 dx+\frac{\sqrt 3}{4} dy, \quad \text{for }(x,y) \in \R^2,\, x<0, \\
\omega_1 &= \frac34 dx+\frac{\sqrt 3}{4} dy, &\omega_2 = -\frac14 dx+\frac{\sqrt 3}{4} dy, \quad \text{for }(x,y) \in \R^2,\, x>0. \\
\end{aligned}
\]
The piecewise constant 1-forms $\omega_i$ for $i=1,2$ are globally closed in $\R^2$ (on the line $\{x=0\}$ they have continuous tangential component), $\|\omega\|_*\le 1$ (cf. Remark \ref{rem:calibration}), and taking their scalar product with respectively $(1,0)\otimes(1/2,-1/2)$, $(-1/2,\sqrt{3}/2)\otimes (1/2,1/2)$ for $x<0$ and $(1/2,\sqrt{3}/2)\otimes (1/2,1/2)$ for $x>0$ we obtain in all cases $1/2$, i.e. $|g|_\infty$, so that  
$$
\int_{\R^2} \langle \omega, \Lambda\rangle=\mathcal{R}^0(\Lambda)\,.
$$
Hence $\omega$ is a calibration for $\Lambda$.

\end{proof}

\begin{remark}\label{rem:calibration1} {\rm A calibration always exists for minimizers in the class of rank one tensor valued measures as a consequence of Hahn-Banach theorem (see e.g. \cite{MaMa}), while it may be not the case in general for graphs with integer or real weights. The classical minimal configuration for (STP) with 3 endpoints $P_1$, $P_2$ and $P_3$ admits 
a calibration with respect to the norm $\Psi_{*}^{0}$ in $\R^{N-1}$ (see \cite{MaMa}) and hence it is a minimizer for the relaxed functional $\mathcal R^0(\Lambda)=||\Lambda||_{\Psi^{0}_{*}}$ among all real weighted graphs (and all rank one tensor valued Radon measures satisfying \eqref{eq:solenoidal}). It is an open problem to show whether or not a minimizer of the relaxed functional $\mathcal R^0(\Lambda)=||\Lambda||_{\Psi^{0}_{*}}$ has integer weights.
}
\end{remark}

\subsection{Extension to general matrix valued measures}

Let us turn next to the convex relaxation of $\mathcal F^\alpha$ for generic ${d\times(N-1)}$ matrix valued measures $\Lambda = (\Lambda_1, \dots, \Lambda_{N-1})$, where $\Lambda_i$, for $1\le i\le N-1$, are the vector measures corresponding to the columns of $\Lambda$.
As a first step observe that, due to the positively $1$-homogeneous request on $\mathcal R^\alpha$, whenever $\Lambda = p \cdot \mathcal H^1\res L = \tau \otimes g \cdot \mathcal H^1\res L$, with $|\tau|=cte.$ and $g_i\in\{0,1\}$, we must have
\[
\mathcal R^\alpha(\Lambda) = \int_{\R^d} |\tau||g|_{1/\alpha} \, d\mathcal H^1\res L = \int_{\R^d} \Phi_\alpha(p) \, d\mathcal H^1\res L,
\]
with $\Phi_\alpha(p) = |\tau||g|_{1/\alpha}$ defined only for matrices $p\in K_0$ ($+\infty$ otherwise), where
\[
K_0=\{\tau\otimes g\in\R^{d\times(N-1)},\ g_i\in\{0,1\}, \ \ |\tau|=cte.\}.
\]

Following \cite{ChCrPo}, we look for $\Phi_\alpha^{**}$, the positively 1-homogeneous convex envelope on $\R^{d\times(N-1)}$ of $\Phi_\alpha$.
Setting $q = (q_1, \dots, q_{N-1})$, with $q_i \in \R^d$ its columns, we have that the convex conjugate function $\Phi_\alpha^*(q)=\sup \{ q\cdot p -\Phi_\alpha(p),\ p\in K_0\}$ is given by
\[
\begin{aligned}
\Phi_\alpha^*(q)&=\sup \left\{\, \tau^t\cdot q\cdot g -|\tau|\cdot|g|_{1/\alpha} \, ,\quad |\tau|=cte., \, g=\sum_{i\in J}e_i,\ J\subset\{1,\dots,N-1\} \right\}\\
&= \sup \left\{\, c\left[ \tau^t\cdot \left( \sum_{j\in J} q_j \right) - |J|^\alpha\right] \, ,\quad c\ge 0,\,|\tau|=1, \, J\subset\{1,\dots,N-1\} \right\}.
\end{aligned}
\]
Hence $\Phi_\alpha^*$ is the indicator function of the convex set
\[
K^\alpha=\left\{ q \in \R^{d\times(N-1)},\;\; \left|\sum_{j\in J}q_j \right|\le |J|^\alpha\ \ \forall\, J\subset\{1,\dots,N-1\}\ \right\},
\]
and in particular, for $\alpha=0$, it holds (cf. \cite{ChCrPo})
\[
K^0=\left\{ q \in \R^{d\times(N-1)}, \;\; \left|\sum_{j\in J}q_j \right|\le 1\ \ \forall\, J\subset\{1,\dots,N-1\}\ \right\}.
\]
It follows that $\Phi_\alpha^{**}$ is the support function of $K^\alpha$, i.e., for $p\in \R^{d\times(N-1)}$,
\begin{equation}\label{eq:convenvelopephi}
\Phi_\alpha^{**}(p)=\sup_{q\in K^\alpha} p\cdot q =\sup \left\{ p\cdot q\,, \ \left|\sum_{j\in J} q_j\right|\le |J|^\alpha\, ,\ J\subset\{1,\dots,N-1\}\ \right\}.
\end{equation}
We are then led to consider, for matrix valued test functions $\phi = (\phi_1, \dots,\phi_{N-1})$, the relaxed functional
\[
\mathcal R^\alpha(\Lambda) = \int_{\R^d} \Phi_\alpha^{**} (\Lambda) = \sup \left\{\ \sum_{i=1}^{N-1} \int_{\R^d} \phi_i \,d\Lambda_i,\quad \phi \in C_c^\infty (\R^d; K^\alpha) \right\}.
\]
Observe that for $\Lambda$ a rank one tensor valued measure and $\alpha = 0$ the above expression coincides with the one obtained in the previous section choosing $\Psi^0=\Psi_*^{0}$.

In the planar case $d=2$, consider a $2\times(N-1)$-matrix valued measure $\Lambda = (\Lambda_1,\dots,\Lambda_{N-1})$ such that $\text{div}\, \Lambda_i = \delta_{P_i}-\delta_{P_N}$. Fix a measure $\Gamma$ as for instance in Remark \ref{rem:graphgamma}. We have $\text{div} (\Lambda-\Gamma) = 0$ in $\R^2$ and by Poincar\'e's lemma there exists $U \in BV(\R^2;\R^{N-1})$ such that $\Lambda  = \Gamma - D U^{\perp}$. 
So the relaxed functional reads
\begin{equation}\label{eq:relaxedmu}
\mathcal E^\alpha(U)=\mathcal R^\alpha(\Lambda) \quad\text{for } \Lambda = \Gamma - D U^{\perp}, \, U\in BV(\R^2;\R^{N-1}).
\end{equation}

The relaxed irrigation problem $(I^\alpha)\equiv\min_{BV}\mathcal E^\alpha(U)$ can thus be described in the following equivalent way, according to \eqref{eq:convenvelopephi}: let $q=\phi$ be any matrix valued test function (with columns $q_i=\phi_i$ for $1\le i\le N-1$), then we have
\begin{equation*}\label{eq:relaxed}
(I^\alpha)\equiv\min\limits_{U\in BV(\R^2;\R^{N-1})} \sup \left\{ \int\limits_{\R^2} \sum_{i=1}^{N-1} (Du_i^\bot-\Gamma_i)\cdot\phi_i  \, , \quad \phi\in C^\infty_c(\R^2;K^\alpha) \ \right\}.
\end{equation*}
Notice that with respect to the similar formulation proposed in \cite{ChCrPo}, there is here the presence of an additional ``drift'' term, moreover the constraints set $K^\alpha$ is somewhat different.

\medskip

We compare now the functional $\mathcal E^\alpha(U)$ with the actual convex envelope $(F^\alpha)^{**}(U)$ in the space $BV(\R^2;\R^{N-1})$, where we set $F^\alpha(U) = |DU^\bot-\Gamma|_{\ell^{1/\alpha}}(\R^2)$ if $\Gamma - D U^\perp = \Lambda$ canonically represents an acyclic graph, and $F^\alpha(U) = +\infty$ elsewhere in $BV(\R^2; \R^{N-1})$. In the spirit of  \cite{ChCrPo} (Proposition 3.1), we have

\begin{lemma}\label{lem:chcrpo} We have $\mathcal E^\alpha(U)\le ( F^\alpha)^{**}(U)\le (N-1)^{1-\alpha}\mathcal E^\alpha(U)$ for any $U \in BV(\R^2;\R^{N-1})$ and any $0\le\alpha<1$.\end{lemma}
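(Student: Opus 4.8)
\emph{Strategy.} Recall that $\mathcal E^\alpha(U)=\mathcal R^\alpha(\Lambda)=\int_{\R^2}\Phi_\alpha^{**}(\Lambda)$ for $\Lambda=\Gamma-DU^\perp$, where $\Phi_\alpha^{**}$ is the support function of $K^\alpha$, and that $(F^\alpha)^{**}$ is by definition the largest convex, weak-$*$ lower semicontinuous functional on $BV(\R^2;\R^{N-1})$ lying below $F^\alpha$. The plan is to prove the two inequalities separately: the left one is a soft consequence of $\mathcal E^\alpha$ being an admissible convex minorant, while the right one is the substantial part and is where the constant $(N-1)^{1-\alpha}$ enters, through the elementary norm comparison
\[
|g|_1\le (N-1)^{1-\alpha}\,|g|_{1/\alpha},\qquad g\in\R^{N-1},
\]
which is a H\"older inequality with conjugate exponents $1/\alpha$ and $(1-\alpha)^{-1}$.

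\emph{Lower bound $\mathcal E^\alpha\le(F^\alpha)^{**}$.} Since $\Phi_\alpha^{**}$ is the convex envelope of $\Phi_\alpha$ one has $\Phi_\alpha^{**}(p)\le\Phi_\alpha(p)$ for every $p$, and in fact $\Phi_\alpha^{**}=\Phi_\alpha$ on $K_0$ (a direct computation shows $\Phi_\alpha^{**}(\tau\otimes\mathbf 1_J)=|J|^\alpha$). Hence, whenever $\Lambda=\Gamma-DU^\perp$ canonically represents an acyclic graph, $\mathcal E^\alpha(U)=\int\Phi_\alpha^{**}(\Lambda)\le\int\Phi_\alpha(\Lambda)=F^\alpha(U)$, while $\mathcal E^\alpha(U)\le+\infty=F^\alpha(U)$ trivially otherwise; thus $\mathcal E^\alpha\le F^\alpha$ on all of $BV(\R^2;\R^{N-1})$. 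On the other hand $\mathcal E^\alpha(U)=\sup\{\sum_i\int_{\R^2}(Du_i^\perp-\Gamma_i)\cdot\phi_i:\phi\in C_c^\infty(\R^2;K^\alpha)\}$ is a supremum of affine functionals that are continuous for the $L^1$ (hence weak-$*$ $BV$) convergence of $U$, so $\mathcal E^\alpha$ is convex and lower semicontinuous. Being a convex lsc minorant of $F^\alpha$, it lies below the greatest such minorant, i.e. $\mathcal E^\alpha\le(F^\alpha)^{**}$.

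\emph{Upper bound $(F^\alpha)^{**}\le(N-1)^{1-\alpha}\mathcal E^\alpha$.} Here I would bound the convex envelope from above by exhibiting explicit convex combinations of admissible graphs. Fix $U$ with $\mathcal E^\alpha(U)<\infty$ and set $\Lambda=\Gamma-DU^\perp$. Applying the coarea formula componentwise to $U=(u_1,\dots,u_{N-1})$, I would slice each $u_i$ along its level sets $\{u_i>s\}$, so that $Du_i^\perp$ is written as a superposition over $s$ of orthogonal gradients of characteristic functions; after extracting, as in Proposition \ref{rem:minrank1}, the acyclic part of the resulting rectifiable measures, this realizes $U$ (up to the closure defining the biconjugate) as a generalized convex combination $U=\sum_k\lambda_k U^{(k)}$ of integer valued $U^{(k)}$ whose associated measures represent admissible graphs $L^{(k)}\in\mathcal G(A)$. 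By convexity $(F^\alpha)^{**}(U)\le\sum_k\lambda_k F^\alpha(U^{(k)})$, and the right hand side is an integral in which, at $\mathcal H^1$-a.e. point, the contributions of the different components combine at worst additively, producing the density $|g|_1$ with $g$ the aggregated multiplicity vector. Comparing with the local energy $\mathcal E^\alpha(U)$, which weighs the very same point by $\Psi_\alpha(g)=|g|_{1/\alpha}$, the norm inequality above gives $\sum_k\lambda_k F^\alpha(U^{(k)})\le(N-1)^{1-\alpha}\mathcal E^\alpha(U)$, whence the claim.

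\emph{Main obstacle.} The delicate step is the upper bound. One has to perform the coarea/layer-cake decomposition of a general real valued $U$ into pieces that are genuinely admissible, i.e. acyclic and carrying the prescribed fluxes $\mathrm{div}\,\Lambda_i=\delta_{P_i}-\delta_{P_N}$, and to keep exact track of how the multiplicities of the different components superpose along their common jump set, so that the aggregated cost is controlled by the $\ell^1$ norm of the multiplicity vector. The norm comparison itself is elementary; the measure-theoretic bookkeeping of the decomposition, together with the passage from the convex hull to its lower semicontinuous closure $(F^\alpha)^{**}$, is the technical core, and is carried out in the spirit of Proposition 3.1 of \cite{ChCrPo}.
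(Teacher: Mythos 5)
Your first inequality is correct and coincides with the paper's argument: since $\Phi_\alpha^{**}\le\Phi_\alpha$ on $K_0$, the functional $\mathcal E^\alpha$ is a minorant of $F^\alpha$, and being a supremum of functionals that are affine and continuous under $L^1$ convergence of $U$ it is convex and lower semicontinuous, hence below the biconjugate; this is exactly what the paper compresses into ``by convexity of $\mathcal E^\alpha$''.

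The second inequality is where your proposal has a genuine gap, and where it departs from the paper. The paper constructs no decomposition at all: it notes that $F^\alpha\le F^1$ on the common finiteness domain (for integer multiplicities $\theta^\alpha\le\theta$), hence $(F^\alpha)^{**}\le(F^1)^{**}$; it then bounds $(F^1)^{**}(U)$ by $\sum_{i}|\Lambda_i|(\R^2)$ with $\Lambda=\Gamma-DU^\perp$, and concludes with the same H\"older inequality you use, but applied at the level of measures, $\sum_i|\Lambda_i|\le(N-1)^{1-\alpha}\bigl(\sum_i|\Lambda_i|^{1/\alpha}\bigr)^{\alpha}\le(N-1)^{1-\alpha}\mathcal E^\alpha(U)$. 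Your route instead requires writing a general $U\in BV(\R^2;\R^{N-1})$ as (a limit of) convex combinations $\sum_k\lambda_k U^{(k)}$ of functions whose associated measures are canonical representations of acyclic graphs, with aggregated cost controlled by the density $|g|_1$; this is precisely the step you defer to ``bookkeeping'', and it does not follow from the tools you invoke. First, the coarea slicing acts on each $u_i$ separately and produces level sets $\{u_i>s\}$, not admissible vectors $U^{(k)}$ all of whose components are unit-weight simple curves from $P_i$ to $P_N$; how the slices of the $N-1$ components are coupled into admissible competitors is not addressed. Second, ``extracting the acyclic part as in Proposition \ref{rem:minrank1}'' changes the measure: discarding the cyclic part of $\Gamma_i-Du_i^\perp$ modifies $Du_i^\perp$, so the pieces no longer reconstruct the given $U$ and the identity $U=\sum_k\lambda_k U^{(k)}$ is lost (in Proposition \ref{rem:minrank1} this replacement is legitimate only because one compares energies of a minimizer, not because it preserves $U$). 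Indeed, if some $\Lambda_i$ carries a cyclic part located away from the rest of its support, positive convex combinations of simple $P_i$-to-$P_N$ curves can reproduce that cycle only through connecting arcs whose contributions cancel in the vector-valued sum but add up in cost, so the asserted estimate $\sum_k\lambda_k F^\alpha(U^{(k)})\le(N-1)^{1-\alpha}\mathcal E^\alpha(U)$ is not established by the outlined construction. Note also that the paper itself states the intermediate bound $(F^1)^{**}(U)\le\sum_i|\Lambda_i|(\R^2)$ without a decomposition argument; if you insist on a constructive proof you would in effect be supplying that missing step, and then the two issues above are exactly what must be resolved. As it stands, the cleaner way to complete your proof is to drop the decomposition and argue through $(F^\alpha)^{**}\le(F^1)^{**}$ as the paper does.
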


\begin{proof}
Observe that $\mathcal E^\alpha(U)\le (\mathcal F^\alpha)^{**}(U)$ by convexity of $\mathcal{E}^\alpha (U)$. Moreover, whenever $\Lambda = \Gamma - DU^\perp$ canonically represents a graph connecting $P_1,\dots,P_N$, we have $( F^\alpha)^{**}(U)\le ( F^1)^{**}(U)$ since $ F^\alpha(U)\le  F^1(U)$. For $\alpha>0$, denoting $\Lambda = \Gamma - DU^\perp$, we deduce
$$( F^1)^{**}(U)\leq\sum_{i=1}^{N-1}|\Lambda_i|(\R^d)\le (N-1)^{1-\alpha}\left(\sum_{i=1}^{N-1}|\Lambda_i|^{1/\alpha}\right)^\alpha(\R^d)\le (N-1)^{1-\alpha}\mathcal E^\alpha(U),$$
and analogously we have $( F^1)^{**}(U)\le (N-1)\mathcal E^0(U)$.
\end{proof}

\section{Numerical identification of optimal structures\label{secnum}}

\subsection{Local optimization by $\Gamma$-convergence}
\begin{figure}[tbh]
\centering
\begin{tabular}{cc}
\includegraphics[height=6cm]{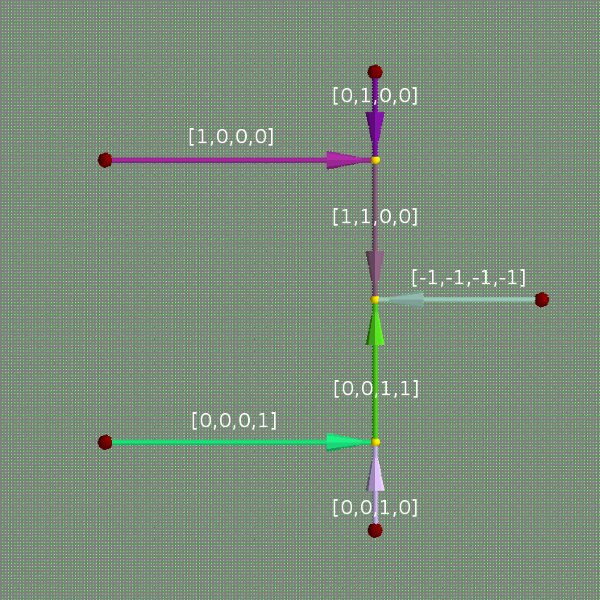} &
\includegraphics[height=6cm]{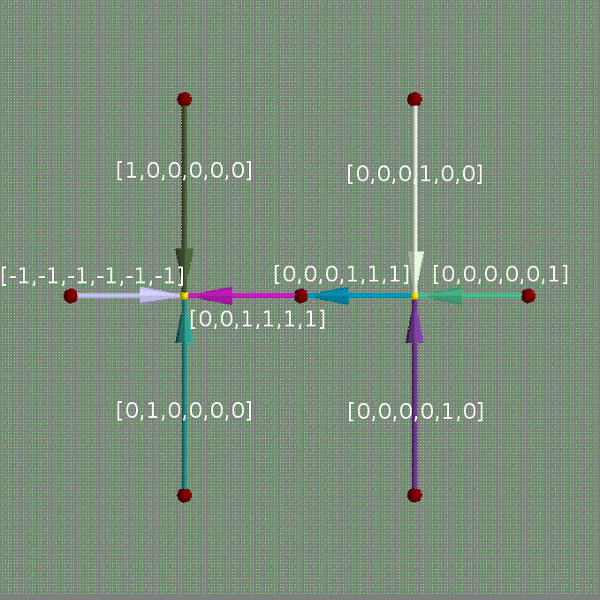} \\
\end{tabular}
\caption{Rectilinear Steiner trees and associated vectorial drifts for five and seven points} \label{fig:drift35}
\end{figure}
In this section, we plan to illustrate the use of Theorem \ref{thm:main}
to identify numerically local minima of the Steiner problem. We base our numerical
approximation on a standard discretization of (\ref{eq:Fepsalpha}). Let $\Omega = (0,1)^2$ and
assume $\{P_1,\dots,P_N\} \subset\Omega$; thus, as a standard consequence, the associated Steiner tree is also contained in
$\Omega$.
Consider a Cartesian grid covering $\Omega$ of step size $h=\frac1S$ where $S>1$ is a fixed integer. Dividing every square cell of the grid into two triangles, we define a triangular mesh $\mathcal{T}$ associated to  $\Omega$ and replace each point $P_i$ with the closest grid point.

Fix now $\Gamma_i$  an oriented vectorial measure absolutely
continuous with respect to $\mathcal H^1$ as in Remark \ref{rem:graphgamma}.
Assume for simplicity  that $\Gamma_i$ is supported  on $\gamma_i$ a union of vertical
and horizontal segments contained in $\Omega$ and covered by the grid
associated to the discrete points $\left\{ (kh,lh),\,0\leq k,l< S \right\}$. Notice
that such a measure can be easily constructed by  considering for instance the
oriented $\ell^1$-spanning tree of the given points.

To mimic the construction in Section \ref{subsec:muljump}, we define the function space
$$H_i^h\equiv P_1(\mathcal{T},\Omega\setminus\gamma_i) \cap BV(\Omega) $$
to be  the set of functions which are globally continuous on $\Omega\setminus\gamma_i$ and
piecewise  linear on every triangle of $\mathcal{T}$. Moreover, we require that
every function of $H_i^h$ has a
jump through $\gamma_i$ of amplitude $-1$ in the orthogonal direction of the orientation
of $\Gamma_i$. Observe that $H_i^h$ is a finite dimensional space of dimension
$S^2$: one element $u^h_i$ can be described by $S^2+n_i$ parameters and $n_i$ linear
constraints describing the jump condition  where $n_i$ is the number of grid points
covered by  $\gamma_i$.

Then, we define
\begin{equation} \label{eq:Fh}
f_h^i(u^h_i) = h|Du^h_i|^2+\frac{1}{h}W(u^h_i),
\end{equation}
if $u \in L^1(\Omega)$ is in $H_i^h$ and extend $f_h^{i}$ by
letting  $f_h^{i}(u) = +\infty$ otherwise. Notice that these discrete energy densities do not contain the drift terms $\Gamma_i$ because the information about the drift has been encoded within the discrete spaces $H_i^h$, leaving us to deal only with the absolutely continuous part of the gradient (see Remark \ref{rem:finiteenergy}). Then, for $U^h = (u_1^h, \dots,u_{N-1}^h) \in H_1^h\times \dots \times H^{h}_{N-1}$ we define
$$
{G}_h^0(U^h)= \int_{\Omega} \sup_{1\le i\le N-1} f^i_h(u_i^h) \quad \text{and} \quad { G}_h^\alpha(U^h)= \int_{\Omega} \, \left(\sum_{i=1}^{N-1} f_h^i(u_i^h)^{1/\alpha}\right)^\alpha.
$$
By a similar strategy we used to prove Theorem \ref{thm:main},
we still also have convergence of minimizers of $G^0_h$ (resp. $G^\alpha_h$) to minimizers of $c_0 F^0$ (resp. $c_0 F^\alpha$) with respect to the strong topology of $L^1(\R^2;\R^{N-1})$. Observe that an exact
evaluation of the integrals involved in \eqref{eq:Fh} is required to obtain this
convergence result (an approximation formula can also be used but then a theoretical proof of convergence would require to study the interaction of the order of approximation with the convergence of minimizers). We point out that this constraint is not critical from
a computational point of view since every function $u_h^i$ of finite energy
has a constant gradient on every triangle of the mesh. On the other hand, the
potential integral can be evaluated formally to obtain an exact estimate of this term
whith respect to the degrees of freedom which describe a function of $H_i^h$.
\begin{figure}[ht]
\centering
\begin{tabular}{ccc}
\includegraphics[height=4cm]{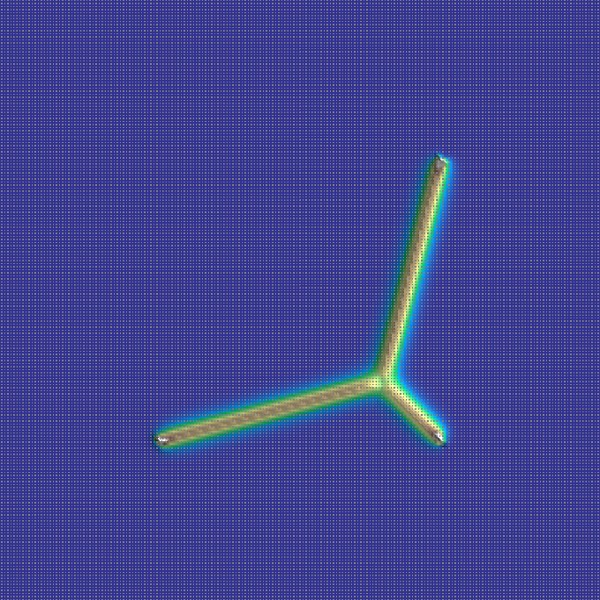} &
\includegraphics[height=4cm]{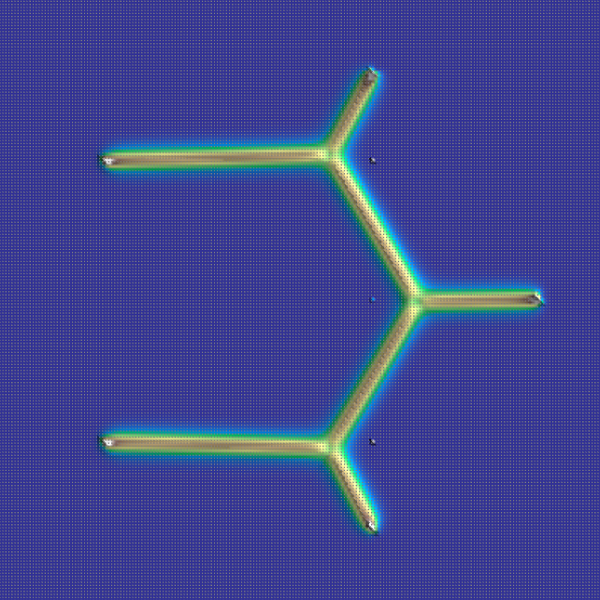} &
\includegraphics[height=4cm]{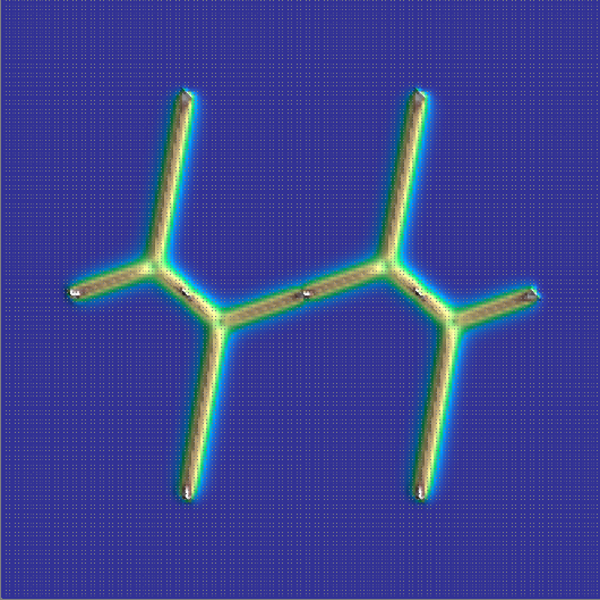}
\end{tabular}
\caption{Local minimizers obtained by the $\Gamma$-convergence approach applied to $3$, $5$ and $7$ points} \label{fig:gamma35}
\end{figure}

Based on these results we performed two different numerical experiments. We first approximated the optimal Steiner trees associated to the vertices of a triangle,
a regular pentagon and a regular hexagon with its center. To obtain the results of figure \ref{fig:gamma35}
we discretized the problem on a grid of size $200 \times 200$. In the case of the triangle
we used the associated spanning tree to define the
measures $(\Gamma_i)_{i = 1,2}$. In the case of the pentagon and of the hexagon
we used the rectilinear Euclidean Steiner trees computed by the Geosteiner's library
(see \cite{warme2001geosteiner} for instance) to initiate the vectorial measures. We refer
to figure \ref{fig:drift35} for an illustration of both singular vector fields.
We solved the resulting finite dimensional problem using an interior point solver. Notice that
in order to deal with the non smooth  cost function  ${ G}_h^0$ we had
to introduce standard gap variables to get a smooth non convex constrained optimization problem. Using
\cite{byrd2006knitro}, we have been able to recover  the locally optimal solutions
depicted in figure  \ref{fig:gamma35} in less than five minutes on a standard computer.
Whereas the results obtained for the triangle and the pentagon describe globally
optimal Steiner trees, the one obtained for the hexagon and its center is only
a local minimizer.

In a second experiment we focus  on simple irrigation problems to illustrate
the versatility of our approach. We  applied exactly the same approach to the pentagon
setting minimizing the functional ${G}_h^\alpha$. We illustrate
our results in figure \ref{fig:irrig5} in which we recover the solutions of
Gilbert-Steiner problems for different values of $\alpha$. Observe that for small
values of $\alpha$, as expected by Proposition \ref{prop:alphatozero}, we recover
an irrigation network close to an optimal Steiner tree.


\begin{figure}[ht]
\centering
\begin{tabular}{ccccc}
\includegraphics[height=2.6cm]{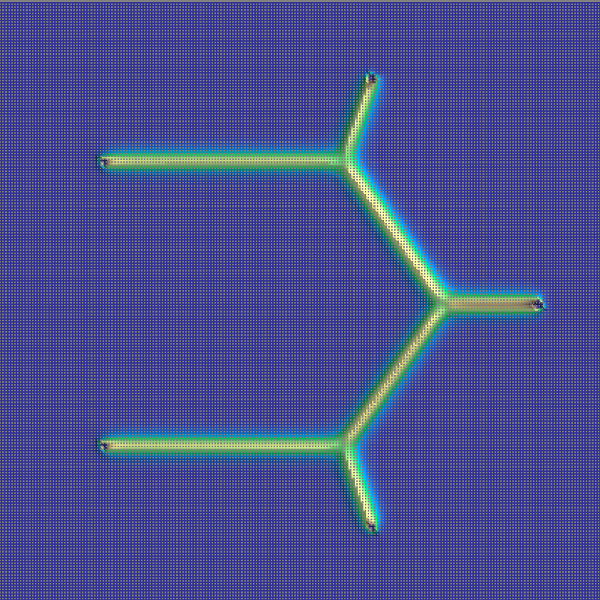} &
\includegraphics[height=2.6cm]{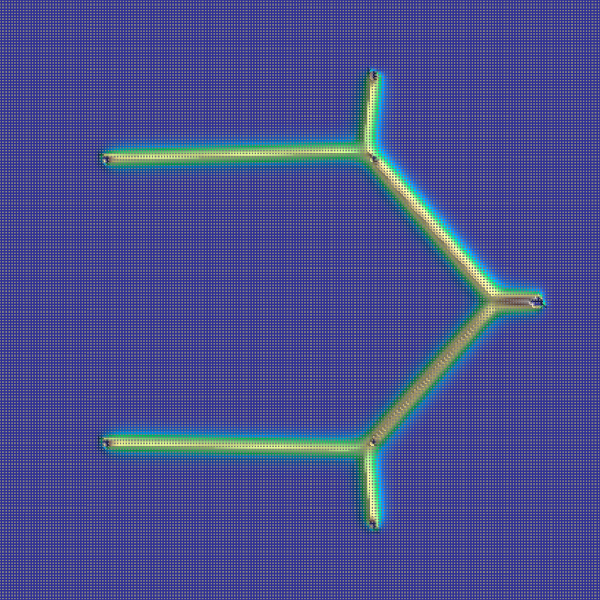} &
\includegraphics[height=2.6cm]{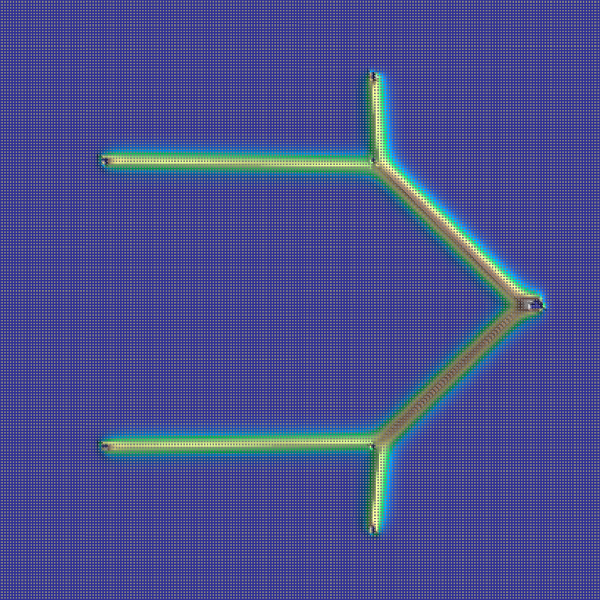} &
\includegraphics[height=2.6cm]{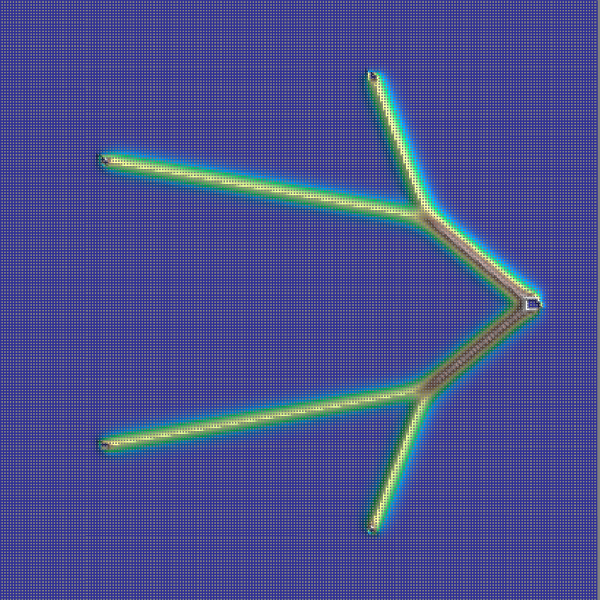} &
\includegraphics[height=2.6cm]{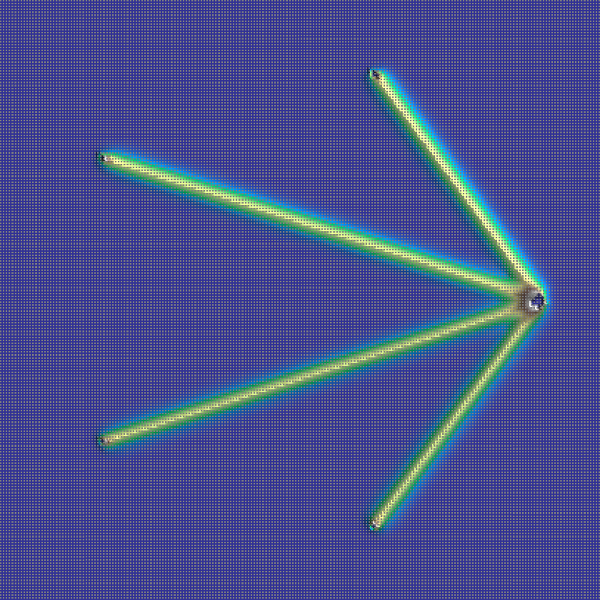}
\end{tabular}
\caption{Gilbert-Steiner solutions associated to parameters $\alpha= 0.2, \, 0.4,\, 0.6,\, 0.8$ and  $1$ (from left to  right)} \label{fig:irrig5}
\end{figure}

\subsection{Convex relaxation and multiple solutions \label{secconv_num}}


The convex relaxation of Steiner problem $(I^0)$  obtained  following \cite{ChCrPo} reads in our discrete setting as:

\begin{equation}
\min_{(u_i^h)_{1\leq i <N}}  \sup_{(\varphi_i^h)_{1\leq i <N} \in K^0
}  \frac{h^2}2 \sum_{t \in \mathcal{T}}  \sum_{i=1}^{N-1}
(\nabla u_i^h)_{t} \cdot (\varphi_i^h)_{t}
\end{equation}
where

\begin{equation} \label{ref:defK}
K^0 = \left\{  (\varphi_i^h)_{1\leq i <N} \in (\R^{2\mathcal{T}})^{N-1}\, | \forall J \subset \{1,\dots,N-1\},
\forall t \in \mathcal{T}, \Big|\sum_{j\in J} (\varphi_j^h)_{t}\Big| \leq 1 \right\}
\end{equation}
and $\forall 1\leq i <N$, $u_i^h \in H_i^h$.
Applying conic duality (see for instance Lecture 2 of \cite{ben2001lectures}), we obtain that the optimal
vector  $(u_i^h)$ solves the following minimization problem

\begin{equation} \label{ref:fconv}
\min_{(u_i^h)_{1\leq i <N} \in L, \, (\psi_J^h)_{ J \subset \{1,\dots,N-1\}} \in (\R^{2\mathcal{T}})^{2^{N-1}} } \frac{h^2}2 \sum_{t \in \mathcal{T}} \sum_{J \subset \{1,\dots,N-1\}} |(\psi_J^h)_{t}|
\end{equation}
where $L$ is the set of discrete vectors $(u_i^h)_{1\leq i <N}$ which satisfy
$\forall i = 1,\dots,N-1, \, \forall t \in \mathcal{T}$:

\begin{equation} \label{ref:defL}
(\nabla u_i^h)_{t}  = \sum_{ J \subset \{1,\dots,N-1\}, \, i \in J} (\psi_J^h)_{t}.
\end{equation}
We solved this convex linearly constrained minimization problem using the conic
solver of the library Mosek  \cite{mosek2010mosek} on a grid of dimension $300 \times300$.
Observe that this convex formulation is also well adapted to the, now standard, large
scale algorithms of proximal type. We studied four different test cases:
the vertices of an equilateral triangle, a square, a pentagon and finally an hexagon
and its center as in previous section.
As illustrated in the left picture of figure \ref{fig:conv3457}, the convex formulation
is able to approximate the optimal structure in the case of the triangle.
Due to the symmetries of the problems, the three last examples do not have unique solutions.
Thus, the result of the optimization is expected to be a convex combination of all solutions whenever the relaxation is sharp, as it can be observed on the second and fourth case of figure \ref{fig:conv3457}. Notice that we do not expect this behaviour to hold for any configuration of points. Indeed the numerical solution in the third picture of figure \ref{fig:conv3457} is not supported on a convex combination of global solutions since the density in the middle point is not $0$.
Whereas the local $\Gamma$-convergence approach of previous
section was only able to produce a local minimum in the case of the hexagon and its center, the convexified formulation gives a relatively precise idea of the set of optimal configurations
(see the last picture of figure \ref{fig:conv3457} where we can recognize within the figure the two global solutions).

\begin{figure}[ht]
\centering
\begin{tabular}{cccc}
\includegraphics[height=3.3cm]{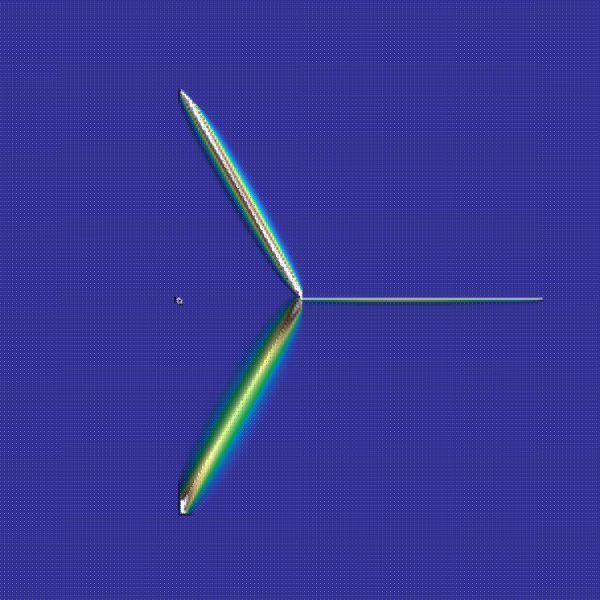} &
\includegraphics[height=3.3cm]{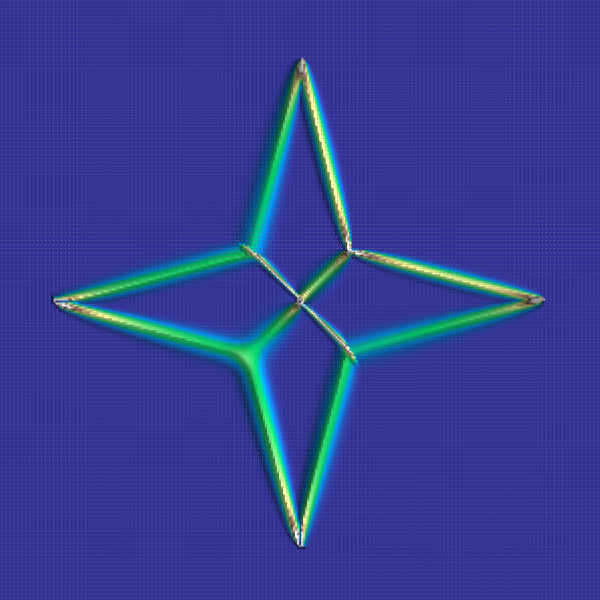} &
\includegraphics[height=3.3cm]{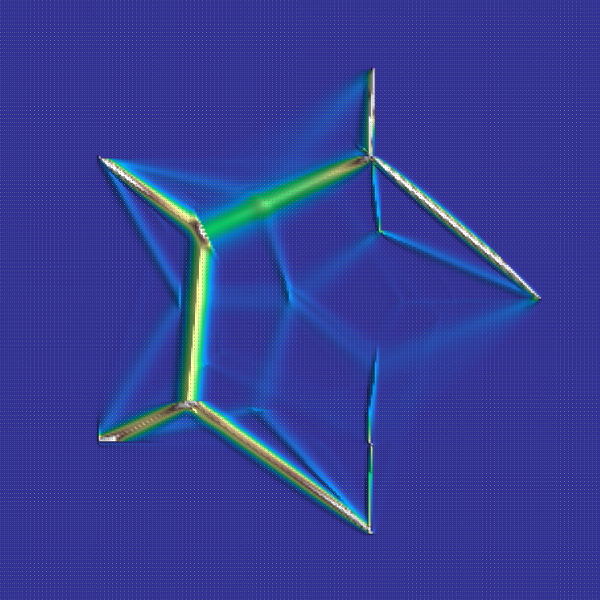} &
\includegraphics[height=3.3cm]{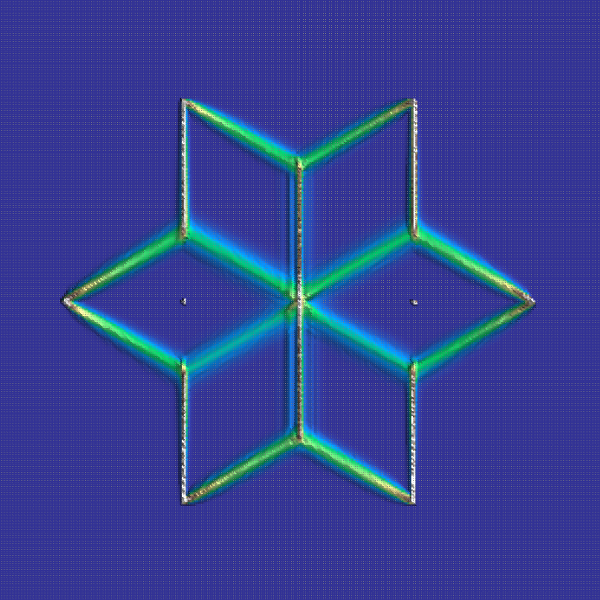}
\end{tabular}
\caption{Results obtained by convex relaxation for $3, 4, \, 5$ and $7$ given points} \label{fig:conv3457}
\end{figure}

\section{Generalizations\label{secgen}}

In this article we have focused on the optimization of one dimensional structures in
the plane in specific, classical cases.
A first possible generalization is to consider the same
problems with respect to more general norms, for instance anisotropic ones:
given $|\cdot|_a$ an anisotropic norm on $\R^d$
and a norm $\Psi^\alpha$ on $\R^{N-1}$ as in Section \ref{sec:rank1}, one could consider, for a matrix valued measure $\Lambda \in \mathcal{M}(\R^d,\R^{d\times N-1})$, the $(\Psi^\alpha,a)$-mass measures
\beq \label{eq:anisotropic}
|\Lambda|_{\Psi^\alpha,a}(B):=\sup_{\substack{\omega \in C^\infty_c(B;\R^d) \\ h \in C^{\infty}_c(B;\R^{N-1})}} \left\{   \langle \Lambda, \omega\otimes h\rangle \, , \quad |\omega(x)|_{a^*} \le 1\, ,\  (\Psi^\alpha)^*(h(x))\le 1 \right\}\, ,
\eeq
for $B \subset \R^d$ open, and the corresponding $(\Psi^\alpha,a)$-mass norm $||\Lambda||_{\Psi^\alpha,a} = |\Lambda|_{\Psi^\alpha,a}(\R^d)$.
Then minimizers of $\mathcal F^\alpha_a = ||\cdot||_{\Psi^\alpha,a}$ over rank one tensor valued measures representing graphs $L \in \mathcal{G}(A)$ will solve the anisotropic irrigation problem (resp. the anisotropic Steiner tree problem in case $\alpha=0$), in particular, if $|\cdot|_a=|\cdot|_1$, $\mathcal F^0_a$ is related to the rectilinear Steiner tree problem in $\R^d$.
For $d=2$, following \cite{Bou,OwSte,AmBel} one may reproduce the $\Gamma$-convergence and convex relaxation approach developed here to numerically study the anisotropic problem \eqref{eq:anisotropic}.
A further step in this direction would consist in considering size or $\alpha$-mass minimization problems in suitable homology and/or oriented cobordism classes for one dimensional chains in manifolds endowed with a Finsler metric.

\medskip
\noindent
Another generalization concerns the convex relaxation and the variational approximation of (STP) and $(I_\alpha)$ in the higher dimensional case $d\ge 3$. This is done in the companion paper \cite{BoOrOu}, where we obtain a $\Gamma$-convergence result by using functionals of Ginzburg-Landau type in the spirit of \cite{ABO2} and \cite{Sandier}. Moreover, as in the present paper, we introduce appropriate ``local" convex envelopes, discuss calibration principles and show some numerical simulations.

\medskip\noindent
In parallel to previous theoretical generalizations,
we are currently developing numerical approaches adapted to these new formulations.
On the one hand, we are studying  a large scale approach to solve
problems analogous to the conic convexified formulation of Section \ref{secconv_num}.
Such an extension is definitely required to approximate realistic problems in dimension
three and higher. On the other hand, we want to focus on refinement techniques which may decrease dramatically
the number of degrees of freedom involved in the optimization process. Observe for instance
that very few parameters are required to describe exactly a drift as the ones given
in Figure \ref{fig:drift35}. Based on such observations, a sequential localized approach
may provide a very precise description of, at least locally, optimal structures.

\section*{Acknowledgements}

The first and second author are partially supported by GNAMPA-INdAM.
The third author gratefully acknowledges the support of the ANR through the
project GEOMETRYA, the project COMEDIC and  the LabEx PERSYVAL-Lab (ANR-11-LABX-0025-01). 
We wish to warmly thank Annalisa Massaccesi and Antonio Marigonda for fruitful discussions.

\bibliographystyle{plain}
\bibliography{references}
\end{document}